\documentclass{article}
\usepackage[utf8]{inputenc}
\usepackage{amsmath}
\usepackage{amsfonts}
\usepackage{amssymb}
\usepackage{amsthm}
\usepackage[all]{xy}
\usepackage{enumerate}
\usepackage{cite}
\usepackage[affil-it]{authblk}
\usepackage{hyperref}
\hypersetup{hypertex=true,
            %colorlinks=true,
            linkcolor=blue,
            anchorcolor=blue,
            citecolor=blue}

\setlength{\parskip}{3pt}
\setlength{\topmargin}{-.2in}
\linespread{1.3}

\textheight=8in
\textwidth=6.25in
\oddsidemargin=.125in
 
\theoremstyle{plain}
  \newtheorem{thm}{Theorem}[section]
  \newtheorem{lem}[thm]{Lemma}
  \newtheorem{prop}[thm]{Proposition}
  \newtheorem{cor}[thm]{Corollary} 
\theoremstyle{definition}
  \newtheorem{defn}[thm]{Definition}
  \newtheorem{rmk}[thm]{Remark}
  \newtheorem{ex}[thm]{Example}

\theoremstyle{plain}

\DeclareMathOperator{\im}{im}
  
\numberwithin{equation}{section}

\allowdisplaybreaks

\title{Formality of Sphere Bundles}
\author{Jiawei Zhou}
\date{April 18, 2024}

\begin{document}

\maketitle

\begin{abstract}
We study the formality of orientable sphere bundles over connected compact manifolds. When the base manifold is formal, we prove that the formality of the bundle is equivalent to the vanishing of the Bianchi-Massey tensor introduced by Crowley-Nordstr\"{o}m. As an example, this implies that the unit tangent bundle over a formal manifold can only be formal when the base manifold has vanishing Euler characteristic or a rational cohomology ring generated by one element. When the base manifold is not formal, we give an obstruction to the formality of sphere bundles whose Euler class is reducible.
\end{abstract}

\textbf{\textit{Keywords.}} formality, sphere bundle, Bianchi-Massey tensor, obstruction

\textbf{\textit{2020 Mathematics Subject Classification.}} 	55P62, 55R25

%\section*{Statements and Declarations}

%\textbf{Conflict of interest.} The author states that there is no conflict of interest.

\section{Introduction}

In rational homotopy theory, the equivalence relation between simply connected topological spaces is defined by a continuous map $f:X\to Y$ inducing an isomorphism $\pi_*(X)\otimes\mathbb{Q}\to\pi_*(Y)\otimes\mathbb{Q}$ between the homotopy groups of their rationalizations. This condition on $f$ is equivalent to $f^*:H^*(Y,\mathbb{Q})\to H^*(X,\mathbb{Q})$ being an isomorphism. Based on the latter, Sullivan \cite{Sullivan} used a commutative differential graded algebra (CDGA) as a model of these spaces. Such CDGAs are composed of polynomial differential forms, or just differential forms if the space is a smooth manifold. The equivalence between CDGAs are also defined by quasi-isomorphisms, whose induced maps on cohomologies are isomorphisms. This equivalence relation on CDGA models also turn out to provide a classification of non-simply connected spaces.

A CDGA is called formal if it is equivalent to its cohomology, and a topological space is called formal if its CDGA model is formal. In short, the rational homotopy type of a formal space can be described just by its cohomology ring, a much simpler structure.

Several examples of formal spaces stand out, for example, H-spaces, symmetric spaces, products of formal spaces, and $k$-connected compact manifolds whose total dimension does not exceed $4k+2$ \cite{miller}. Additionally, compact K\"ahler manifolds are formal, as was first shown by Deligne, Griffiths, Morgan and Sullivan \cite{deli}. In general, a compact complex manifold is formal if it satisfies the $dd^c$-lemma. On the other hand, the $dd^{\Lambda}$-lemma, an analogous condition for symplectic manifolds which is equivalent to satisfy the hard Lefschetz property, does not imply formality, even if the manifold is simply connected \cite{caval}.

For general spaces, Sullivan in \cite{Sullivan} (see also \cite{deli}) used a special class of algebras to classify them. It consists of free graded algebras, each with a well-ordered set of generators. The differential of each generator belongs to the subalgebra generated by elements with smaller indices. Such algebras are referred to as the Sullivan algebras. Every connected CDGA is equivalent to a Sullivan algebra uniquely up to quasi-isomorphism, and a special type of Sullivan algebra uniquely up to isomorphism, which is called the minimal Sullivan algebra. If the CDGA is a model of some space, its equivalent Sullivan algebra and minimal Sullivan algebra are called the Sullivan model and the minimal Sullivan model of this space respectively.

Adding a generator of odd degree $k$ to the Sullivan model of a manifold is equivalent to constructing an orientable $S^k$-bundle over it. So we are interested in investigating how this process affects formality, and potentially applying this knowledge to study the formality for general fiber bundles. In this paper, manifolds are generally assumed to be smooth, connected and compact. Sphere bundles are assumed to be orientable, and moreover, the cohomologies are assumed to be over a field $\mathbb{K}$ of characteristic 0. Most of our proofs should also be applicable for general topological manifolds and spherical Serre fibrations since they do not rely on the smooth structure or the bundle structure.

The relationship between the formalities of the base and the total space was studied by Lupton \cite{Lupton} and Amann-Kapovitch \cite{AK}. For a fibration of simply connected spaces of finite type, if the fiber $F$ is formal, rationally elliptic (i.e. both $\pi_*(F)\otimes\mathbb{Q}$ and $H^*(F,\mathbb{Q})$ are finite dimensional), and also satisfies the Halperin conjecture (i.e. that for any fibration $F\to E\to B$ with the base $B$ being simply connected, the induced morphism $H^*(E,\mathbb{Q})\to H^*(F,\mathbb{Q})$ is surjective), then the base is formal if and only if the total space is formal. This condition holds for bundles with even-dimensional sphere fibers.

Both Lupton's and Amann-Kapovitch's proof rely on the fact that when the fiber satisfies the Halperin conjecture, the cohomology ring of the total space can be expressed as a tensor product of the cohomologies of the base and that of the fiber. Since the fiber being an odd dimensional sphere does not satisfy this condition, the cohomology of the total space becomes more intricate. Actually, there are simple examples of a non-formal bundle over a formal manifold with the fiber being an odd-dimensional sphere. Such is the case of an orientable circle bundle over a torus with a non-trivial Euler class.

However, adding one additional generator from the fiber does not break formality dramatically. In the case of a Boothby-Wang fibration, which is a circle bundle over a symplectic manifold with Euler class being the symplectic structure, Biswas, Fern\'andes, Mu\~noz and Tralle \cite[Proposition 4.5]{biswas-et-al} proved that the higher order (greater than 3) Massey products of the total space vanish, if the base symplectic manifold is formal and satisfying the hard Lefschetz property. In an earlier work \cite{own}, we also showed that a sphere bundle over a formal manifold has an $A_{\infty}$-minimal model where the only non-trivial operations are $m_2$ and $m_3$.

Regarding $A_{\infty}$-algebra, it is said to be formal if it has an $A_{\infty}$-minimal model with only $m_2$ non-trivial. The information of $m_3$ is encoded in the Bianchi-Massey tensor introduced by Crowley and Nordstr\"{o}m \cite{cn}, which is a linear map from a subspace of $(H^*)^{\otimes 4}$ to $H^*$. More precisely, a compact manifold whose Bianchi-Massey tensor vanishes has an $A_{\infty}$-minimal model with $m_3=0$. Therefore, it is natural to conjecture that a sphere bundle over a formal manifold is formal if its Bianchi-Massey tensor vanishes.

Unlike the possible different representatives of $A_\infty$-minimal models and Massey products, the Bianchi-Massey tensor is uniquely defined without ambiguity. So once we prove the above conjecture in Section 3, it follows that the formality of a sphere bundle over a formal manifold can be determined by finite calculation. It turns out that we can construct a CDGA equivalence between the differential forms of the sphere bundle and its cohomology, and our proof will not use the language of $A_\infty$-algebra.

\begin{thm}\label{intro formality over formal}
Suppose $M$ is a compact formal manifold, and $\pi:X\to M$ is an orientable $S^k$-bundle. Then $X$ is formal if and only if the Bianchi-Massey tensor of $\Omega^*(X)$ vanishes. Moreover, when $k$ is even, $X$ is always formal.
\end{thm}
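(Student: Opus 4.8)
The plan is to separate the even and odd fiber cases, since the two have very different rational homotopy. For even $k=2n$ I would first observe that the rational Euler class vanishes, so that the bundle has a manifestly formal model. The relative minimal model of the fiber $S^{2n}$ contributes generators $x$ (degree $k$) and $y$ (degree $2k-1$) with $dy=x^2$ on the fiber; writing the bundle differential as $Dx\in H^{k+1}(M)$, the identity $D^2y=0$ forces $2x\,Dx=0$, and since $x$ is an even polynomial generator this gives $Dx=0$ over a field of characteristic $0$. Replacing the base by $(H^*(M),0)$ using formality of $M$ and normalizing $y$, the bundle is modeled by $(H^*(M)\otimes\Lambda(x,y),D)$ with $Dx=0$ and $Dy=x^2-p$ for a single class $p\in H^{2k}(M)$. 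The projection $x\mapsto x$, $y\mapsto 0$ is then a quasi-isomorphism onto $(H^*(M)[x]/(x^2-p),0)=(H^*(X),0)$, so $X$ is formal and its Bianchi--Massey tensor vanishes automatically; the asserted equivalence holds trivially.

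For odd $k$ the forward implication is formal nonsense: the Bianchi--Massey tensor is a well-defined invariant of the cohomology $A_\infty$-structure, and it vanishes for any formal space, so $X$ formal implies it vanishes. For the converse I would work with the concrete model $(B,D)=(H^*(M)\otimes\Lambda(z),D)$, with $|z|=k$ odd and $Dz=e\in H^{k+1}(M)$ the Euler class, obtained by pushing the relative minimal model of the fibration along the formality quasi-isomorphism of $M$. Since $z^2=0$, the algebra $B$ carries a \emph{fiber grading} counting the single possible $z$-factor, taking only the values $0$ and $1$; the product $m_2$ preserves this grading, while any contracting homotopy $h$ inverting $D$ raises it by one.

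Transferring the CDGA structure of $(B,D)$ to $H^*(X)$, the tree formula shows that each transferred operation $m_n$ raises the fiber grading by exactly $n-2$, one unit per internal edge, while $H^*(X)$ supports only fiber gradings $0$ and $1$. This reproves the result of \cite{own} that $m_n=0$ for $n\ge 4$ and that the surviving operation $m_3$ raises the fiber grading by one. I would then match the vanishing of the Bianchi--Massey tensor with the statement that $m_3$ is a Hochschild coboundary $m_3=\partial\xi$, choosing a primitive $\xi$ of fiber grading $+1$. The $A_\infty$-isomorphism $\mathrm{id}+\xi$ removes $m_3$, and every operation it creates is an iterated Gerstenhaber bracket of fiber-grading-$(+1)$ cochains; composing two such operations raises the fiber grading by at least two, which forces a $z^2$ factor and hence vanishes. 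Thus the transformed structure has only $m_2$ nontrivial, and $X$ is formal.

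The main obstacle is the precise dictionary in the last step: I must verify that the Crowley--Nordstr\"{o}m tensor vanishes exactly when $[m_3]=0$ in the relevant Hochschild cohomology, and that the Hochschild differential respects the fiber grading, so that a fiber-grading-$(+1)$ primitive $\xi$ exists whenever $m_3$ is a coboundary. Closely related is the justification that the relative minimal model may be replaced by the split model $(H^*(M)\otimes\Lambda(z),D)$ over the formal base. Once these are in place, the heart of the argument is the clean bookkeeping that brackets of fiber-grading-$(+1)$ operations vanish by $z^2=0$, which both recovers the $m_{\ge 4}=0$ phenomenon and guarantees that killing $m_3$ introduces no new higher operations.
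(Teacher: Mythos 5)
Your even-fiber argument is correct (and in fact more direct than the paper's, which deduces the even case from triviality of $\ker$ of multiplication by $\theta^2+\tfrac14[p]$), and the architecture of your odd-fiber argument closely mirrors the paper's actual proof: your ``fiber grading'' is the paper's ideal $\mathbf{I}(\theta)$, your grading-$(+1)$ primitive $\xi$ plays the role of the paper's homotopy $\gamma$ with $\im\gamma\subset\mathbf{I}(\theta)$, and your observation that composites of two grading-$(+1)$ operations die because $z^2=0$ is exactly the mechanism behind the $p=4$ (and $p\geq 5$) checks in Lemma \ref{vanishing uniform massey to formal}. The forward implication (formal $\Rightarrow$ Bianchi--Massey tensor vanishes) is indeed routine.

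The genuine gap is the step you yourself flag as ``the main obstacle,'' and it is not a bookkeeping issue: it is the entire mathematical content of the converse. You need that vanishing of the Bianchi--Massey tensor produces a primitive of $m_3$ (in your language, $[m_3]=0$ in Hochschild cohomology, with a representative primitive of fiber grading $+1$). This is not a formal consequence of the definitions: $\mathcal{F}$ records only a symmetrized restriction of triple-product data to $\mathcal{B}^*=\mathcal{G}^2E^*\cap K[\mathcal{G}^2\mathcal{G}^2H^*]$, which is a priori far less information than the full Hochschild class of $m_3$; Crowley--Nordstr\"om \cite{cn} prove that $\mathcal{F}=0$ detects formality only for highly connected manifolds in a metastable dimension range, and outside such settings the implication ``$\mathcal{F}=0\Rightarrow[m_3]=0$'' has no general proof. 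The paper closes exactly this gap in Lemma \ref{vanishing uniform massey}, and the proof there is where Poincar\'e duality of the compact total space enters essentially: the correction term $\eta$ is constructed by prescribing $\alpha_H\frown[\eta(e)]$ via the Poincar\'e pairing, duality is used again to check $\im\eta\subset\mathbf{I}(\theta)$ (so that the corrected $\gamma'$ keeps fiber grading $+1$), and once more to promote vanishing of the uniform Massey product $\mathcal{T}$ in the top degree $n+1$ to vanishing in all degrees. Your outline never invokes compactness or Poincar\'e duality at any point, so as written it could only prove the (false in that generality) statement for arbitrary formal bases; the duality input is not optional. Until you supply an argument for this implication --- essentially reproving Lemma \ref{vanishing uniform massey} --- the proposal establishes only the easy direction and the even case.
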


It turns out that a trivial Euler class is a sufficient condition for the formality of a sphere bundle over a formal manifold. For non-trivial Euler classes, we can consider the special case that the Euler class is of top degree. This requires the manifold to be even-dimensional. We prove that such bundles are formal only when the rational cohomology rings of the base manifolds are generated by one element. This is a consequence of Theorem $\ref{formality of extended by top degree}$.

\begin{thm}\label{intro unit tangent bundle}
Suppose that $M$ is a $2k$-dimensional compact orientable formal manifold. Let $X$ be an $S^{2k-1}$-bundle with non-trivial Euler class. If $X$ is formal, then $H^*(M)=\mathbb{K}[x]/(x^p)$ is a quotient of the polynomial ring with a single variable.
\end{thm}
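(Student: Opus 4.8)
The plan is to work with an explicit CDGA model. Write $n=\dim M$ (even) and let $\omega\in H^{n}(M)$ denote the volume class, so that the fiber is $S^{n-1}$ and the Euler class is $[\omega]$. Since $M$ is formal, $(H^{*}(M),0)$ is a model for $M$, and, following the model for odd-sphere bundles described in the introduction, $X$ is modeled by the CDGA $(H^{*}(M)\otimes\Lambda(y),D)$ with $|y|=n-1$, $Dy=\omega$ and $D|_{H^{*}(M)}=0$. First I would compute the cohomology of this complex, obtaining as graded vector spaces $H^{*}(X)\cong H^{<n}(M)\oplus\bigl(H^{\ge 1}(M)\bigr)y$; in particular $[\omega]=0$ in $H^{*}(X)$, while $[cy]\ne 0$ for every nonzero $c\in H^{\ge 1}(M)$. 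The goal is then to show that formality of $X$ forces $H^{*}(M)$ to be generated, as an algebra, by a single element; Poincar\'e duality together with finite dimensionality will identify such an algebra with a truncated polynomial ring $\mathbb{K}[x]/(x^{p})$ on one (necessarily even-degree) generator.

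Next I would isolate the Massey products visible in this model. Suppose $a,b,c\in H^{+}(M)$ are closed classes with $ab=\lambda\omega$ for some $\lambda\in\mathbb{K}$ and $bc=0$ in $H^{*}(M)$; then $[a][b]=\lambda[\omega]=0$ and $[b][c]=0$ in $H^{*}(X)$, so $\langle[a],[b],[c]\rangle$ is defined. Because $D(\lambda y)=ab$ and $bc=0$, the defining cochains may be taken to be $\lambda y$ and $0$, which gives $\langle[a],[b],[c]\rangle\ni\pm\lambda\,[cy]$. A count of degrees, using $|a|+|b|=n$ when $\lambda\ne0$, shows that the indeterminacy is contained in $\bigl(a\cdot H^{|c|-|a|}(M)\bigr)y$: the complementary factor $H^{|a|+|b|-1}(X)=H^{n-1}(M)$ multiplies $c$ into degrees $\ge n$, where $H^{*}(X)$ vanishes, and the other factor contributes only the asserted term. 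Hence this Massey product is nonzero as soon as $\lambda\ne 0$ and $c\notin a\cdot H^{|c|-|a|}(M)$.

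The core of the argument is the algebraic step of manufacturing such a triple whenever $H^{*}(M)$ is not monogenic. I would let $d_{0}$ be the least positive degree with $H^{d_{0}}(M)\ne 0$ and fix a nonzero $x\in H^{d_{0}}(M)$. If $x$ fails to generate $H^{*}(M)$, then multiplication by $x$, from $H^{m-d_{0}}(M)$ to $H^{m}(M)$, is non-surjective for some $m$ (for $m=d_{0}$ this means precisely $\dim H^{d_{0}}(M)\ge 2$). Let $j$ be the least such $m$ and choose $c\in H^{j}(M)\setminus x\cdot H^{j-d_{0}}(M)$. Using Poincar\'e duality I would pick $x'\in H^{n-d_{0}}(M)$ with $xx'=\omega$; when $j>d_{0}$ one automatically has $x'c=0$ for degree reasons, and when $j=d_{0}$ the functionals $\langle x,\,\cdot\,\rangle$ and $\langle c,\,\cdot\,\rangle$ are linearly independent on $H^{n-d_{0}}(M)$, so $x'$ can in addition be chosen with $x'c=0$. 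Applying the previous paragraph to $(a,b,c)=(x,x',c)$ produces a Massey product containing $\pm[cy]$ whose indeterminacy lies in $\bigl(x\cdot H^{j-d_{0}}(M)\bigr)y$; since $c\notin x\cdot H^{j-d_{0}}(M)$ and $c\mapsto cy$ is injective, the product is nonzero, so $X$ is not formal. Contraposing yields that formality of $X$ makes $H^{*}(M)$ monogenic, and the proof concludes as in the first paragraph.

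The step I expect to be the main obstacle is exactly this last extraction: converting the purely qualitative failure of monogenicity into a single nonzero Massey product over an arbitrary field of characteristic zero. The delicate point is that naive square-type products can vanish over non-closed fields — for instance, no nonzero degree-two class of $H^{*}(\mathbb{CP}^{2}\#\mathbb{CP}^{2};\mathbb{Q})$ squares to zero — so the argument must be routed through the duality-adapted triple $\langle x,x',c\rangle$ above, with careful verification that $[cy]$ survives the indeterminacy rather than relying on any accidental vanishing of products.
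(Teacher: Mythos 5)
Your proposal is correct, but it follows a genuinely different route from the paper's. Both start from the same model $(H^*(M)\otimes\Lambda y,\ Dy=\omega)$, but the paper never computes a classical Massey product: having shown (Theorems 3.4--3.5) that formality of such a bundle over a formal base is equivalent to vanishing of the Bianchi--Massey tensor $\mathcal{F}$, it evaluates $\mathcal{F}$ --- which is defined with no indeterminacy --- on explicit symmetric tensors to prove two structural lemmas, namely $H^{\mathrm{odd}}(M)=0$ (Lemma 3.6, via $\mathcal{F}\big(([x]\cdot[x^*])\cdot([x]\cdot[x^*])\big)=2[\theta\omega]\neq0$) and injectivity of every cup-product map $H^i\otimes H^j\to H^{i+j}$ with $i,j\le n$ (Lemma 3.7); a purely algebraic argument (Lemma 3.8) then shows that an evenly graded Poincar\'e algebra with this injectivity property must be $\mathbb{K}[x]/(x^p)$. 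You instead target monogenicity directly: the failure of surjectivity of multiplication by the lowest-degree class $x$ at the least degree $j$ is converted into the duality-adapted triple $(x,x',c)$, whose classical Massey product contains $\pm[cy]$ and has indeterminacy inside $\big(x\cdot H^{j-d_0}(M)\big)y$, hence cannot contain zero. Your route is more elementary --- it needs only the classical facts that formal spaces have vanishing triple Massey products and the standard indeterminacy formula, not the Crowley--Nordstr\"om machinery or the paper's Theorem 1.1 --- and it treats odd- and even-degree classes uniformly rather than in separate lemmas; what it gives up is exactly what $\mathcal{F}$ buys the paper, namely freedom from indeterminacy bookkeeping (your verification that $H^{n-1}(X)\cdot[c]=0$ and $[x]\cdot H^{n-d_0+j-1}(X)\subseteq\big(x\cdot H^{j-d_0}(M)\big)y$ is the load-bearing step, and is precisely where a careless version would fail) together with the reusable intermediate facts about $H^*(M)$. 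One phrasing in your second paragraph should be tightened: $H^*(X)$ does not vanish in degrees $\ge n$ (the classes $[vy]$ live there); what vanishes there is the $\mathrm{coker}\,\omega$ summand, which is where a product of two base classes necessarily lands --- your containment is right, but for that reason.
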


It immediately follows that if the unit tangent bundle of a formal manifold $M$ is formal, then either the Euler characteristic $\chi(M)=0$, or $H^*(M)$ is generated by one element.

It is also interesting to consider the more general case when the base $M$ is not necessarily formal. As $\Omega^*(M)$ may no longer be represented by the cohomology, we shall consider its minimal Sullivan model, labelled by $\mathcal{M}$, instead. Depending on the reducibility of the representative of the Euler class in $\mathcal{M}$, the minimal Sullivan model of the sphere bundles can have two types. In this paper, we mainly consider the reducible case, and give an obstruction to formality under certain conditions.

\begin{thm}\label{intro HL obstruction}
Let $(M,\omega)$ be a connected symplectic manifold satisfying the hard Lefschetz property. Suppose $[\omega]$ is an integral and reducible cohomology class, i.e. there exists some $x_i,y_i\in H^1(M)$ such that $[\omega]=\sum [x_i]\wedge[y_i]$. Then the Boothby-Wang fibration of $M$, i.e. the circle bundle with Euler class $[\omega]$, is non-formal.
\end{thm}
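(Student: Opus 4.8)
The plan is to prove non-formality by producing a single (matric) Massey product in $H^{*}(X)$ that is non-zero modulo its indeterminacy; since every Massey product of a formal space vanishes uniformly, this is enough. The starting point is the minimal model of the circle bundle. Write $\dim M=2n$, let $(\Lambda V,d)$ be the Sullivan minimal model of $M$, and choose closed degree-one representatives $x_i,y_i\in\Lambda V$ of the classes in the reducible expression $[\omega]=\sum_{i=1}^{r}[x_i][y_i]$. Then the minimal model of $X$ is $(\Lambda V\otimes\Lambda(t),D)$ with $|t|=1$, $D|_{\Lambda V}=d$, and
\[
Dt=\sum_{i}x_iy_i .
\]
Thus $t$ is an explicit primitive in $X$ for the (now exact) pulled-back Euler class $\pi^*[\omega]$, and this is precisely the data from which a Massey product can be built.

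Next I would assemble the reducibility relation into a matric Massey product. Set $A=(x_1,\dots,x_r)$ and $B=(y_1,\dots,y_r)^{\mathsf T}$, so that $AB=\sum_i x_iy_i=Dt$; in cohomology $[A][B]=\pi^*[\omega]=0$, with $t$ serving as the chosen primitive. I would then take a third entry $C$ of degree $2n-1$ — the natural candidate being $z\,\omega^{n-1}$ for a class $z\in H^1(M)$ — arranged so that $BC=0$ in $H^*(X)$. This last condition is automatic here, since every top-degree class of $M$ pulls back to zero in $X$ (because $\pi^*[\omega]^n=(\pi^*[\omega])^n=0$). The resulting matric triple product $\langle A,B,C\rangle$ is then a well-defined coset in $H^{2n}(X)$.

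To evaluate it I would push the product down to the base by the Gysin (fibre-integration) map $\pi_*\colon H^{2n}(X)\to H^{2n-1}(M)$, characterized by $\pi_*(t\beta)=\beta$ and $\pi_*(\alpha)=0$ for $\alpha$ free of $t$. A direct computation of a defining system — using that the primitives of $y_iC$ needed for the Massey product are supplied by $t\omega^{n-1}$ together with the relation $D(t\omega^{n-1})=\omega^{n}$ — shows that $\pi_*\langle A,B,C\rangle=L^{n-1}(v)$, where $L=\cup[\omega]$ and $v\in H^1(M)$ is an explicit degree-one class built from $z$ and the reducibility data. Here the hard Lefschetz property enters decisively: $L^{n-1}\colon H^1(M)\to H^{2n-1}(M)$ is an isomorphism, so $\pi_*\langle A,B,C\rangle\neq 0$ as soon as $v\neq 0$, and the freedom in choosing $z$ — equivalently, the non-degeneracy of the Lefschetz pairing $Q(\alpha,\beta)=\langle[\alpha][\beta][\omega]^{n-1},[M]\rangle$ on $H^1(M)$, which is itself a consequence of hard Lefschetz — guarantees that among these products at least one has $v\neq 0$.

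The main obstacle is the indeterminacy. The matric product is only defined modulo $[A]\cdot H^{2n-1}(X)+H^{1}(X)\cdot[C]$, and I must check that $L^{n-1}(v)$ is not already contained in the image of this indeterminacy under $\pi_*$. This is the delicate step, and it is again hard Lefschetz that controls it: applying $\pi_*$ reduces the indeterminacy to subspaces of $H^{2n-1}(M)$ of the form $\sum_i[x_i]\cdot(\text{primitive cohomology})$, and the primitive (Lefschetz) decomposition, together with the non-degeneracy of the pairing $Q$, is what forces $L^{n-1}(v)$ to lie outside it for a suitable choice of $C$. Carrying out this containment-failure cleanly — rather than the formal bookkeeping of the defining system — is where the real work lies. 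Once it is established, the non-vanishing of $\langle A,B,C\rangle$ modulo indeterminacy contradicts the formality of $X$, completing the proof.
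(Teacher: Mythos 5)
Your strategy has a genuine gap, and it is not merely unfinished bookkeeping: the key unproven claim --- that for a suitable choice of $C$ the value $\pi_*\langle A,B,C\rangle$ escapes the indeterminacy --- is false in general. The indeterminacy of your matric triple product contains $[A]\cdot H^{2n-1}(X)^{r\times 1}=\sum_i[x_i]\cdot H^{2n-1}(X)$, and by the projection formula together with the Gysin sequence, its image under $\pi_*$ is exactly $\sum_i [x_i]\cdot\ker\bigl(L\colon H^{2n-2}(M)\to H^{2n}(M)\bigr)$, where $L=\cup[\omega]$. Now take the standard example $M=T^4$ with $[\omega]=[a_1][b_1]+[a_2][b_2]$, so $A=(a_1,a_2)$ and $B=(b_1,b_2)^{\mathsf T}$. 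Here $\ker\bigl(L\colon H^2\to H^4\bigr)$ is spanned by $a_1a_2,\ a_1b_2,\ a_2b_1,\ b_1b_2,\ a_1b_1-a_2b_2$, and a direct check gives
$$
[a_1]\cdot\ker L+[a_2]\cdot\ker L=\langle a_1a_2b_1,\ a_1a_2b_2,\ a_1b_1b_2,\ a_2b_1b_2\rangle=H^3(T^4).
$$
Since $L\colon H^2(M)\to H^4(M)$ is onto (hard Lefschetz), the Gysin sequence makes $\pi_*\colon H^4(X)\to H^3(M)$ an isomorphism, so the indeterminacy is \emph{all} of $H^4(X)$: every matric triple product of your form contains $0$, for every choice of $C$ (and of $z$). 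Thus for the Boothby--Wang fibration over $T^4$ --- a non-formal nilmanifold squarely covered by the theorem --- your proposed obstruction vanishes identically and no contradiction with formality can be extracted. The case $n=1$ (the Heisenberg manifold over $T^2$) is misleading here: there $\ker(L\colon H^0\to H^2)=0$, the indeterminacy collapses, and the familiar product $\langle a,b,a\rangle$ does the job, but this does not survive to $n\geq 2$.

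This is precisely the difficulty the paper's proof is engineered to avoid. Instead of exhibiting a single Massey-type class modulo its (too large) indeterminacy, the paper assumes formality, invokes the Deligne--Griffiths--Morgan--Sullivan criterion for the Sullivan minimal model $\mathcal{M}_\theta=\mathcal{M}\otimes\Lambda\theta$ (a splitting $V^*=C^*\oplus N^*$ such that every closed element of $\mathbf{I}(N^*)$ is exact), and runs an induction over the terms of $\omega=\sum_j x_jy_j$: it constructs closed classes $a_{i,2i}\in H^s(\mathcal{M})$ with $[a_{i,2i}x_j]=0$ for $j\le 2i$ and $[a_{i,2i}x_{2i+1}]\neq 0$, using at each step both the isomorphism $\omega\colon H^s\to H^{s+2r}$ and the injectivity of $\omega\colon H^{s-r}\to H^{s+r}$, until the list of $x_j$'s is exhausted and a contradiction appears. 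In effect the paper tracks a whole coherent system of vanishing conditions simultaneously --- exactly the information that is discarded when one passes to a single coset-valued product. To salvage your approach you would need some notion of uniform vanishing of the entire family of products with compatible defining systems (in the spirit of the paper's uniform Massey product $\mathcal{T}$, or of the Bianchi--Massey tensor), which is essentially what the paper's induction implements by hand.
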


This obstruction can be generalized to $S^{4k+1}$-bundles, as long as the Euler class $[\omega]$ can be written as a sum of products of $(2k+1)$-cohomology classes, and the hard Lefschetz property can be weakened in the following way: There exists some $s\geq 0$ such that taking the product with $[\omega]$ is an isomorphism from a non-trivial space $H^s(M)$ to $H^{s+4k+2}(M)$, and also, is injective from $H^{s-2k-1}(M)$ to $H^{s+2k+1}(M)$. The condition of the sphere dimension being $4k+1$ is difficult to relax. We will give a simple example of a formal $S^{4k+3}$-bundle satisfying other requirements.  

This paper is organized as follows. In Section 2, we review the algebraic tools that play a central role in this paper, including the Bianchi-Massey tensor and the minimal Sullivan model.  We give the proof of Theorem \ref{intro formality over formal} and  Theorem \ref{intro unit tangent bundle} in Section 3. And in Section 4, we discuss the formality of general sphere bundles, and give the proof of Theorem \ref{intro HL obstruction}. 

\textbf{Acknowledgement.} The author thanks Ruizhi Huang, Si Li, Jianfeng Lin, Li-Sheng Tseng and Jie Wu for helpful discussions and valuable suggestions. The author is also grateful to the referee for carefully reading the paper and giving many helpful suggestions for improvement, especially for pointing out and resolving an issue with the proof of Lemma \ref{vanishing uniform massey}, and suggesting a generalization of Lemma \ref{vanishing uniform massey to formal}. The author would like to acknowledge the support of the National Key Research and Development Program of China No.~2020YFA0713000.

\section{Preliminary}

\subsection{Bianchi-Massey tensor}
Let $V$ be a graded vector space over a field $\mathbb{K}$ of characteristic 0. We let $\mathcal{G}^kV$ denote the $k$-th graded symmetric power of $V$, i.e. the quotient space of $V^{\otimes k}$ by the relations of graded commutativity. We will use $(x_1\cdot x_2)$ denote the graded symmetric product of $x_1$ and $x_2$. Hence, $(x_1\cdot x_2)=(-1)^{|x_1||x_2|}(x_2\cdot x_1)$, where $|x_1|,|x_2|$ are the degree of $x_1$ and $x_2$ respectively. We can define $(x_1\cdot x_2\cdot \ldots\cdot x_k)$ similarly, and such elements generate $\mathcal{G}^kV$ when all $x_i\in V$.

\begin{rmk}
In our setting, $\mathcal{G}^kV$ is isomorphic to the space of graded commutative $k$-tensors of $V$, although this is not true if the characteristic of the ground field is nonzero or $V$ is replaced by an abelian group.
\end{rmk}

Let $K[\bullet]$ denote the kernel of a tensor space under full graded symmetrisation. For example, if $V$ is a graded vector space, $K[\mathcal{G}^2\mathcal{G}^2V]$ is the kernel of the following symmetrisation
$$
\mathcal{G}^2\mathcal{G}^2V\to \mathcal{G}^4V,\quad \big( (x\cdot y) \cdot (z\cdot w) \big) \mapsto (x\cdot y\cdot z\cdot w).
$$
Thus, $(x\cdot y)(z\cdot w)-(-1)^{|y||z|}(x\cdot z)(y\cdot w)\in K[\mathcal{G}^2\mathcal{G}^2V]$.

Now suppose $\mathcal{A}$ is a CDGA (commutative differential graded algebra). Let
$$
c:\mathcal{G}^2H^*(\mathcal{A})\to H^*(\mathcal{A}), \quad (x\cdot y)\mapsto xy
$$
denote the product map, and $E^*(\mathcal{A})=\ker c$, which we will simply write as $E^*$. Then we set
$$
\mathcal{B}^*(H^*(\mathcal{A}))=\mathcal{G}^2E^*\cap K[\mathcal{G}^2\mathcal{G}^2H^*(\mathcal{A})].
$$
For simplicity, we will use $\mathcal{B}^*(\mathcal{A})$ to denote $\mathcal{B}^*(H^*(\mathcal{A}))$.

Let $\mathcal{Z}^*(\mathcal{A})\subset \mathcal{A}^*$ be the subspace of $d$-closed elements. Pick a right inverse $\alpha:H^*(\mathcal{A})\to \mathcal{Z}^*(\mathcal{A})$ for the projection to cohomology. Then the map
$$
\alpha^2:\mathcal{G}^2H^*(\mathcal{A})\to \mathcal{A}^*, \quad (x\cdot y)\mapsto \alpha(x)\alpha(y)
$$
takes exact values on $E^*$. So there exists a linear map $\gamma:E^*\to \mathcal{A}^{*-1}$ satisfying $d\gamma=\alpha^2$.

\begin{defn}
The map
$$
\mathcal{G}^2E^*\to \mathcal{A}^{*-1},\quad (e\cdot e')\mapsto \gamma(e)\alpha^2(e')+(-1)^{|e||e'|}\gamma(e')\alpha^2(e)
$$
takes closed values on $\mathcal{B}^*(A)$. So it induces a map
$$
\mathcal{F}:\mathcal{B}^*(\mathcal{A})\to H^{*-1}(\mathcal{A}),
$$
which is called the \textbf{Bianchi-Massey tensor}. This map is independent of the choices of $\alpha$ and $\gamma$.
\end{defn}

Another obstruction to formality is the uniform Massey triple product. It is defined as follows.
\begin{defn}\label{def of uniform Massey}
Given choices of $\alpha$ and $\gamma$ as before, the map
$$
\gamma\alpha:E^*\otimes H^*(\mathcal{A})\to \mathcal{A}^{*-1}, \quad e\otimes x\mapsto \gamma(e)\alpha(x)
$$
takes closed values on $K[E^*\otimes H^*(\mathcal{A})]$, which is the kernel of the full graded symmetrisation $E^*\otimes H^*(\mathcal{A})\to\mathcal{G}^3H^*(\mathcal{A})$. So it induces a map
$$
\mathcal{T}:K[E^*\otimes H^*(\mathcal{A})]\to H^{*-1}(\mathcal{A}),
$$
which is called the \textbf{uniform Massey triple product}.
\end{defn}

Unlike the Bianchi-Massey tensor, the uniform Massey triple product does depend on the choices of $\alpha$ and $\gamma$.

\subsection{Minimal Sullivan model}

\begin{defn}
A \textbf{Sullivan algebra} is a CDGA $\mathcal{M}=\Lambda V^*$ which is free as a graded algebra. $V^*$ has a homogeneous basis $\{v_{\alpha}\}$ indexed by a well-ordered set, such that the degree $|v_{\alpha}|\geq 1$ and $dv_{\alpha}\in\Lambda V^*_{<\alpha}$. Here $V^*_{<\alpha}$ is spanned by $v_{\beta}$ with $\beta<\alpha$.

If a Sullivan algebra also satisfies $\beta<\alpha$ whenever $|v_{\beta}|<|v_{\alpha}|$, it is called \textbf{minimal}.
\end{defn}

In a minimal Sullivan algebra $\mathcal{M}=\Lambda V^*$, all $dv_{\alpha}$ are \textbf{reducible}, i.e. there exist $x_i,y_i\in\mathcal{M}^+$ such that $dv_{\alpha}=\sum x_iy_i$. Here $\mathcal{M}^+$ is the subspace of $\mathcal{M}$ spanned by elements with degree at least 1. Equivalently, we can say that all $dv_{\alpha}$ are in $\Lambda^{\geq 2} V$, the subspace generated by elements of wordlength at least 2.

\begin{defn}
If a CDGA $\mathcal{A}$ is quasi-isomorphic to a (minimal) Sullivan algebra $\mathcal{M}$, we call $\mathcal{M}$ a \textbf{(minimal) Sullivan model} of $\mathcal{A}$. For a manifold $M$, say $\mathcal{M}$ is a (minimal) Sullivan model of $M$ if it is a (minimal) Sullivan model of $\Omega^*(M)$.
\end{defn}

Every simply-connected manifold has a minimal Sullivan model generated by a graded vector space $V^*$ of finite type, i.e. the subspace $V^k$ of degree $k$ is finite dimensional for all $k$. Moreover, the degrees of all elements in $V^*$ are at least 2 \cite{deli}. More generally, every connected CDGA has a minimal Sullivan model generated by $V^*$, and the degrees of all elements in $V^*$ are at least 1 \cite{Halperin}. The minimal Sullivan model is unique up to isomorphism.

\begin{thm}[Deligne-Griffiths-Morgan-Sullivan \cite{deli}]\label{C oplus N decomposition}
A minimal Sullivan algebra $\mathcal{M}$ is formal if and only if the following statement holds. $\mathcal{M}$ is generated by $V^*=C^*\oplus N^*$, where $C^*$ is the subspace of closed elements in $V^*$ and $N^*$ is a direct complement of $C^*$, and all the closed elements in the ideal $\mathbf{I}(N^*)$ generated by $N^*$ are exact.
\end{thm}

\section{Formality of Sphere Bundles over Formal Spaces}

\subsection{The Bianchi-Massey tensor determines formality}
Let $\omega\in \mathcal{A}^{k}$ be a closed element in a CDGA $\mathcal{A}$. Suppose $\theta$ has degree $k-1$, and satisfies $\theta^2=\theta\theta=0$ (This trivially holds when $k-1$ is odd). We can extend $\mathcal{A}$ by $\theta$:
$$
\mathcal{A}_{\omega}=\mathcal{A}\otimes \Lambda\theta=\{ x+\theta y \mid x,y\in \mathcal{A} \}
$$
with $d\theta=\omega$. Here $\Lambda\theta=\langle 1,\theta \rangle$ is an exterior algebra generated by $\theta$.

$\mathcal{A}_{\omega}$ is also a CDGA. As the following lemma shows, its isomorphism class depends only on the cohomology class $[\omega]\in H^*(\mathcal{A})$ in numerous instances.

\begin{lem}\label{choosing representatives of extension}
Suppose $\mathcal{A}$ is a connected CDGA, i.e. $H^0(\mathcal{A})=\mathbb{K}$. Take an even-dimensional, closed but non-exact element $\omega\in\mathcal{A}$, then construct $\mathcal{A}_{\omega}$ as above. Let $\bar{\omega}\in\mathcal{A}$ be an arbitrary representative of $[\omega]\in H^*(\mathcal{A})$, and $\bar{\theta}$ be any element in $\mathcal{A}_{\omega}$ satisfying $d\bar{\theta}=\bar{\omega}$. Then $\mathcal{A}_{\omega}$ can also be written as $\mathcal{A}_{\bar{\omega}}=\mathcal{A}\otimes \Lambda\bar{\theta}$.
\end{lem}
\begin{proof}
Write $\bar{\theta}=\xi+\theta\eta$ for some $\xi,\eta\in\mathcal{A}$, then
$$
\bar{\omega}-\omega-d\xi=d(\theta(\eta-1))=(\eta-1)\omega-\theta d\eta
$$
is exact in $\mathcal{A}$. Thus, $d\eta=0$ and $[(\eta-1)\omega]=0$ in $H^*(\mathcal{A})$. Since $H^0(\mathcal{A})=\mathbb{K}$, the closed element $\eta\in\mathcal{A}^0$ has to be a constant. Moreover, $[\omega]\in H^*(\mathcal{A})$ is a non-trivial class by hypothesis, so $\eta$ can only be 1. Hence, $\bar{\theta}$ is in the affine space $\theta+\mathcal{A}$. Then we can write $\mathcal{A}_{\omega}=\mathcal{A}\oplus\bar{\theta}\mathcal{A}$ as a graded vector space.

By hypothesis $\omega$ is of even degree, so the degree of $\bar{\theta}$ is odd and $\bar{\theta}^2=0$. This fact together with the algebraic structure on $\mathcal{A}_{\omega}$ identify this CDGA and $\mathcal{A}\otimes \Lambda\bar{\theta}$.
\end{proof}

When $\mathcal{A}$ is formal, there exists a zigzag of quasi-isomorphisms connecting $\mathcal{A}$ and $H^*(\mathcal{A})$. These quasi-isomorphisms are naturally extended to quasi-isomorphisms connecting $\mathcal{A}_{\omega}$ and $H^*(\mathcal{A})\otimes \Lambda\theta$. For the later CDGA we set $d\theta=[\omega]$. Thus, the formality of $\mathcal{A}_{\omega}$ is determined by $H^*(\mathcal{A})\otimes \Lambda\theta$ as long as $\mathcal{A}$ is formal.

Therefore, it is sufficient to consider the CDGA $\mathcal{A}$ with trivial differential $d=0$. In this case, the space of exact elements in $\mathcal{A}_{\omega}$ is $\im\omega$, the image of the map by left multiplying $\omega$
$$
\omega:\mathcal{A}^i\to\mathcal{A}^{i+k},\quad x\mapsto\omega x.
$$
The space of closed elements in $\mathcal{A}_{\omega}$ is $\mathcal{A}\oplus \theta\ker\omega$. Thus,
$$
H^*(\mathcal{A}_{\omega})\simeq \text{coker}\,\omega\oplus\theta\ker\omega.
$$

\begin{defn} 
Let $H^*$ be a finite dimensional graded commutative algebra over $\mathbb{K}$. If there exists some $\alpha_H\in (H^n)^{\vee}$, the dual space of $H^n$, such that the linear map
$$
\alpha_H\frown:H^i\to (H^{m-i})^{\vee},\quad x\mapsto\big(y\mapsto\alpha_H(xy)\big)
$$
is an isomorphism for all $i$, then we say that $H$ is \textbf{$n$-dimensional Poincar\'e} and $\alpha_H$ is a \textbf{Poincar\'e class}.

A CDGA is called \textbf{$n$-dimensional Poincar\'e} if its cohomology is.
\end{defn}

For a Poincar\'e CDGA, the Bianchi-Massey tensor $\mathcal{F}$ and the uniform Massey product $\mathcal{T}$ are equivalent \cite[Lemma 2.8]{cn}. In particular, if $\mathcal{F}$ vanishes, we can choose $\alpha$ and $\gamma$ such that $\mathcal{T}=0$. Based on their proof we will show that, when $\mathcal{A}_{\omega}$ is a Poincar\'e CDGA, there exists such a choice also satisfying $\im\gamma\subset\mathbf{I}(\theta)$. Here $\mathbf{I}(\theta)$ is the ideal generated by $\theta$.

\begin{lem}\label{vanishing uniform massey}
Let $\mathcal{A}$ be a CDGA with trivial differential. Suppose $\mathcal{A}_{\omega}$ is $n$-dimensional Poincar\'e and the Bianchi Massey tensor $\mathcal{F}:\mathcal{B}^{n+1}(\mathcal{A}_{\omega})\to H^n(\mathcal{A}_{\omega})$ vanishes. There are choices of $\alpha$ and $\gamma$ such that $\im\gamma\subset \mathbf{I}(\theta)$ and the morphism
$$
\gamma\alpha: E^*(\mathcal{A}_{\omega})\otimes H^*(\mathcal{A}_{\omega})\to \mathcal{A}_{\omega}^{*-1},\quad e\otimes a\mapsto \gamma(e)\alpha(a)
$$
is trivial on $K[E^*(\mathcal{A}_{\omega})\otimes H^*(\mathcal{A}_{\omega})]$.
\end{lem}

\begin{proof}
We will use $E^*$ to denote $E^*(\mathcal{A}_{\omega})$ in this proof. It is sufficient to construct $\alpha$ and $\gamma$ such that $\im\gamma\subset \mathbf{I}(\theta)$ and the uniform Massey product $\mathcal{T}=0$ on $K[E^*\otimes H^*(\mathcal{A}_{\omega})]$. Then the image of $\gamma\alpha$ are exact elements in $\mathbf{I}(\theta)$. But as discussed earlier in this section, the exact elements in $\mathcal{A}_{\omega}$ are in $\im\omega$. So $\gamma\alpha$ has to be 0 on $K[E^*\otimes H^*(\mathcal{A}_{\omega})]$.

Pick a right inverse of the left multiplication map $\omega:\mathcal{A}\to\im\omega$ and extend it to an endomorphism on $\mathcal{A}$. We use $\omega^{-1}$ to denote this map.

Choose an $\alpha:H^*(\mathcal{A}_{\omega})\to \mathcal{A}_{\omega}^*$ such that $\alpha(\theta\ker\omega)\subset\mathbf{I}(\theta)$. As $\alpha^2(E^*)$ consists of exact forms, it is in $\im\omega\subset\mathcal{A}$. So we can set
$$
\gamma=\theta\circ\omega^{-1}\circ\alpha^2:E^*\to \mathcal{A}_{\omega}^{*-1}.
$$
Then $\im\gamma\subset \mathbf{I}(\theta)$. It follows that $\im\mathcal{T}\subset\theta\ker\omega$. For an extreme case that $\ker\omega=0$, we already have $\mathcal{T}=0$. Note that in this case $\mathcal{A}$ is infinite dimensional, unless $\mathcal{A}=0$.

From now on we consider the general case that $\theta\ker\omega$ is non-trivial. Then $\theta\ker\omega$ must contain $H^n(\mathcal{A}_{\omega})$, because for non-trivial $\theta x\in\theta\ker\omega$ there exists $y\in H^*(\mathcal{A}_{\omega})$ such that
$$
\alpha_H(\theta xy)=(\alpha_H\frown \theta x)y\neq 0.
$$

To make $\mathcal{T}=0$, we will change $\gamma$ to $\gamma'=\gamma+\eta$ for some $\eta:E^*\to \mathcal{Z}^{*-1}(\mathcal{A}_{\omega})$ satisfying $\im\eta\subset \mathbf{I}(\theta)$.

Consider the map
$$
\gamma\alpha^2:E^*\otimes H^*(\mathcal{A}_{\omega}) \otimes H^*(\mathcal{A}_{\omega})\to \mathcal{A}_{\omega}^{*-1},\quad e\otimes x\otimes y\mapsto \gamma(e)\alpha(x)\alpha(y).
$$
It takes closed values on $K[E^*\otimes H^*(\mathcal{A}_{\omega}) \otimes H^*(\mathcal{A}_{\omega})]$, and factors through the projection $E^*\otimes H^*(\mathcal{A}_{\omega}) \otimes H^*(\mathcal{A}_{\omega})\to E^*\otimes \mathcal{G}^2H^*(\mathcal{A}_{\omega})$. Hence, $\gamma\alpha^2$ induces another map
$$
\mu:K[E^*\otimes \mathcal{G}^2H^*(\mathcal{A}_{\omega})]\to H^{*-1}(\mathcal{A}_{\omega}).
$$
$\mu$ depends on the choice of $\gamma$. Our goal is to find some $\gamma'$ such that the corresponding $\mu'=0$ when acting on the degree $n+1$ part of $K[E^*\otimes \mathcal{G}^2H^*(\mathcal{A}_{\omega})]$.

First consider the restriction of $\mu$ to $K[E^*\otimes E^*]$. Note that here $K[E^*\otimes E^*]$ means the kernel of full graded symmetrisation $E^*\otimes E^*\to \mathcal{G}^4H^*(\mathcal{A}_{\omega})$. For arbitrary $e,e'\in E^*$,
\begin{align*}
\gamma(e)\alpha^2(e')-(-1)^{|e||e'|}\gamma(e')\alpha^2(e) &= \gamma(e)d\gamma(e')-(-1)^{|e||e'|+(|e'|-1)|e|}d\gamma(e)\gamma(e') \\
&= (-1)^{|e|-1}d\big(\gamma(e)\gamma(e')\big).
\end{align*}
So $\mu$ vanishes on graded anti-symmetric tensors, and it factors through the projection $K[E^*\otimes E^*]\to\mathcal{B}^*(\mathcal{A_{\omega}})$. Moreover, the induced map $\mathcal{B}^*(\mathcal{A_{\omega}})\to H^{*-1}(\mathcal{A}_{\omega})$ is exactly the Bianchi-Massey tensor, which is 0 when acting on $\mathcal{B}^{n+1}(\mathcal{A_{\omega}})$ by assumption. Therefore, $\mu\equiv 0$ on $K[E^*\otimes E^*]$, in particular on its degree $n+1$ part. As $\eta(e)\alpha^2(e')$ is exact for any $e,e'\in E^*$, $\mu'=0$ on the degree $n+1$ part of $K[E^*\otimes E^*]$ for any choice of $\gamma'$.

Let $q:\mathcal{G}^2H^*(\mathcal{A}_{\omega})\to \mathcal{G}^2(\text{coker}\,\omega)$ be the morphism induced by the projection $H^*(\mathcal{A}_{\omega})\to \text{coker}\,\omega$ with kernel $\theta\ker\omega$, and $L^*$ be the preimage of $\theta\ker\omega$ under the multiplication map $c:\mathcal{G}^2H^*(\mathcal{A}_{\omega})\to H^*(\mathcal{A}_{\omega})$. Then $c\circ q(L^*)=0$ because its image is the projection of $c(L^*)$. Hence, $q(L^*)\subset E^*$. So there exists an induced map
$$
Q: E^*\otimes L^*\to E^*\otimes E^*,\quad e\otimes a\mapsto qe\otimes qa.
$$
Moreover, $Q$ sends $K[E^*\otimes L^*]$ into $K[E^*\otimes E^*]$. Here $K[E^*\otimes L^*]$ also means the kernel of full graded symmetrisation $E^*\otimes L^*\to \mathcal{G}^4H^*(\mathcal{A}_{\omega})$.

Let $e\in E^*$ and $a\in L^*$ with $|e|+|a|=n+1$. Then every term of $a-qa$ has a factor in $\theta\ker\omega$. Thus, $\alpha^2(a-qa)\in\mathbf{I}(\theta)$. As $\gamma(e)$ is also in $\mathbf{I}(\theta)$, we have $\gamma(e)\alpha^2(a)=\gamma(e)\alpha^2(qa)$. For the same reason,
\begin{align*}
\gamma(e)\alpha^2(qa) &= (-1)^{|e|}d\big(\gamma(e)\gamma(qa)\big)-(-1)^{|e|}\alpha^2(e)\gamma(qa) \\
&= (-1)^{|e|}d\big(\gamma(e)\gamma(qa)\big)-(-1)^{|e|}\alpha^2(qe)\gamma(qa) \\
&= (-1)^{|e|}d\big(\gamma(e)\gamma(qa)\big)-(-1)^{|e|}d\big(\gamma(qe)\gamma(qa)\big)+\gamma(qe)\alpha^2(qa).
\end{align*}
Therefore, $\mu(e\otimes a)=[\gamma(e)\alpha^2(a)]=[\gamma(qe)\alpha^2(qa)]=\mu\circ Q(e\otimes a)$. As discussed above, $\mu$ vanishes on $K[E^*\otimes E^*]$. So $\mu(K[E^*\otimes L^*])=\mu\circ Q(K[E^*\otimes L^*])\subset\mu(K[E^*\otimes E^*])=0$.

Now let $p=id\otimes c: K[E^*\otimes\mathcal{G}^2H^*(\mathcal{A}_{\omega})]\to E^*\otimes H^*(\mathcal{A}_{\omega})$ sending $e\otimes(x\cdot y)$ to $e\otimes xy$. Then $\ker p$ is exactly $K[E^*\otimes E^*]$. Hence, $p$ induces a morphism $\bar{\mu}:\im p\to H^*(\mathcal{A}_{\omega})$ of degree $-1$. As $\mu$ vanishes on $K[E^*\otimes L^*]$, $\bar{\mu}=0$ on $\im p\cap (E\otimes\theta\ker\omega)$. So we can extend $\bar{\mu}$ to all $E^*\otimes H^*(\mathcal{A}_{\omega})$ such that it vanishes on $E^*\otimes\theta\ker\omega$.

For each $e\in E^i$, $\bar{\mu}$ induces a morphism
$$
H^{n+1-i}(\mathcal{A_{\omega}})\to \mathbb{K}, \quad x\mapsto -\alpha_H\circ\bar{\mu}(e\otimes x)
$$
where $\alpha_H$ is the Poincar\'e class of $H^*(\mathcal{A_{\omega}})$. Let $\delta(e)$ be the unique element in $H^{i-1}(\mathcal{A_{\omega}})$ such that this morphism equals to $\alpha_H\frown\delta(e)$. We claim that $\delta(e)\in\theta\ker\omega$.

Write $\delta(e)=x_0+\theta y_0$ for some $x_0\in\text{coker}\,\omega$ and $y_0\in\ker\omega$. If $x_0\neq 0$, there exist some $x_1\in\text{coker}\,\omega$ and $y_1\in\ker\omega$ such that $(\alpha_H\frown x_0)(x_1+\theta y_1)\neq 0$, i.e. $x_0(x_1+\theta y_1)$ is non-trivial in $H^n(\mathcal{A_{\omega}})$. By the discussion at the beginning of the proof, $H^n(\mathcal{A_{\omega}})\subset \theta\ker\omega$. So $x_0x_1$ has to be 0. It follows that $\delta(e)\theta y_1=(x_0+\theta y_0)\theta y_1=x_0\theta y_1\neq 0$. On the other hand, $\delta(e)\theta y_1=-\alpha_H\circ\bar{\mu}(e\otimes\theta y_1)=0$ as $\bar{\mu}=0$ on $E^*\otimes\theta\ker\omega$, which is a contradiction. Hence, $x_0$ must be 0 and $\delta(e)\in\theta\ker\omega$.

Let $\eta=\alpha\circ\delta:E^*\to \mathcal{Z}^{*-1}(\mathcal{A_{\omega}})$. It represents $\delta$ and its image is in $\mathbf{I}(\theta)$. Then $\gamma'\alpha^2=(\gamma+\eta)\alpha^2$ induces a map $\mu':K[E^*\otimes \mathcal{G}^2H^*(\mathcal{A}_{\omega})]\to H^{*-1}(\mathcal{A}_{\omega})$. For arbitrary $\sum e_j\otimes a_j\in K[E^*\otimes \mathcal{G}^2H^*(\mathcal{A}_{\omega})]$,
\begin{align*}
\mu'\left(\sum e_j\otimes a_j\right) &= \mu\left(\sum e_j\otimes a_j\right)+\sum [\eta(e_j)\alpha^2(a_j)] \\
&= \bar{\mu}\left(\sum e_j\otimes c(a_j)\right)+\sum\delta(e_j)c(a_j) = 0.
\end{align*}

Finally, we prove that the uniform Massey product $\mathcal{T}'$ defined by $\gamma'$ is identically 0. This follows from $K[E^*\otimes H^*(\mathcal{A}_{\omega})]\otimes H^*(\mathcal{A}_{\omega})\subset K[E^*\otimes H^*(\mathcal{A}_{\omega})\otimes H^*(\mathcal{A}_{\omega})]$ and
$$
\mathcal{T'}\left(\sum e_j\otimes x_j\right) y=\left[\sum\gamma(e_j)\alpha(x_j)\alpha(y)\right]=\mu'\left(\sum e_j\otimes (x_j\cdot y)\right)=0
$$
for $e_j\in E^*$, $x_j,y\in H^*(\mathcal{A}_{\omega})$ and $|e_j|+|x_j|+|y|=n+1$. Thus, $\alpha_H\frown \mathcal{T'}\left(\sum e_j\otimes x_j\right)=0$ and then $\mathcal{T'}\left(\sum e_j\otimes x_j\right)=0$. As $\im\gamma'\subset\mathbf{I}(\theta)$ by construction, the discussion in the first paragraph of the proof shows that $\gamma'\alpha$ has to be 0 on $K[E^*\otimes H^*(\mathcal{A}_{\omega})]$. 
\end{proof}

\begin{lem}\label{vanishing uniform massey to formal}
Let $\mathcal{A}$ be any connected CDGA. Suppose that there are choices of $\alpha:H^*(\mathcal{A})\to \mathcal{A}^*$ and $\gamma:E^*(\mathcal{A})\to \mathcal{A}^{*-1}$ such that the restriction of $\gamma\alpha$ to $K[E^*(\mathcal{A})\otimes H^*(\mathcal{A})]$ and $\gamma^2: E^*(\mathcal{A})\otimes E^*(\mathcal{A})\to \mathcal{A}^{*-2}, e\otimes e' \mapsto \gamma(e)\gamma(e')$ both vanish. Then $\mathcal{A}$ is formal.
\end{lem}
\begin{proof}
For simplicity, in this proof we will use $H^*$ and $E^*$ to denote $H^*(\mathcal{A})$ and $E^*(\mathcal{A})$ respectively.

Construct a Sullivan model $\phi:\Lambda V \to H^*$ as follows. Start from a $d$-closed space $V^*_0$ of lower index 0. Then construct $V^*_p$ inductively such that $dV^k_p\subset (\Lambda V_{<p}^*)^{k+1}_{p-1}$. Here $V_{<p}^*$ denotes $V_0^*\oplus V_1^*\oplus\ldots\oplus V_{p-1}^*$, and $(\Lambda V_{<p}^*)_{p-1}$ is the subspace of elements of lower index $p-1$, where the lower index of an element in $V_{i_1}^*\otimes\ldots\otimes V_{i_k}^*$ is $i_1+\ldots+i_k$.

Precisely, let $V_0^*=H^+$ as a graded vector space, i.e. for each $x\in H^+$, there exists a unique $v_x\in V_0^*$ such that $\phi(v_x)=x$. Then let $V_1^k$ be spanned by $\left\{ w_{(x\cdot y)} |  (x\cdot y)\in\big(\mathcal{G}^2H^+\big)^{k+1} \right\}$, and set $dw_{(x\cdot y)}=v_{xy}-v_xv_y$. Thus, $\phi$ induces an isomorphism from the quotient space $\Lambda V_0^*$ over the ideal generated by $dV_1^*$ to $H^*$.

For $p\geq 2$, define $V_p^*$ such that $d: V^k_p \to (\Lambda V_{<p}^*)^{k+1}_{p-1}\cap \ker d$ is an isomorphism. Let $V=\oplus_{p\geq 0} V_p^*$ and $\phi(V_p^*)=0$ for all $p\geq 1$. Then $\phi:\Lambda V \to H^*$ is a quasi-isomorphism.

Now we construct a CDGA morphism $f:\Lambda V\to \mathcal{A}^*$ as follows. For $v_x\in V_0^*$, let $f(v_x)=\alpha(x)$. For $w_{(x\cdot y)}\in V_1^*$, let $f(w_{(x\cdot y)})=\gamma\big( (1\cdot xy)-(x\cdot y) \big)$. Then $f$ is well-defined on $\Lambda(V_0^*\oplus V_1^*)$. We claim that $f=0$ when acting on the closed elements of lower index 1, so that we can make $f=0$ on $V_2^*$.

To prove the claim, we will use the fact that for $x,y,z\in H^+$,
$$
(-1)^{|x|}v_x w_{(y\cdot z)}-w_{(x\cdot y)} v_z-w_{(xy\cdot z)}+w_{(x\cdot yz)}
$$
is a closed element in $\ker f$. A straight forward calculation shows that it is closed. And $f$ acting on this element is $\gamma\alpha$ acting on
$$
\Phi = (-1)^{|x|(|y|+|z|)} (1\cdot yz-y\cdot z)\otimes x-(1\cdot xy-x\cdot y)\otimes z-(1\cdot xyz-xy\cdot z)\otimes 1+(1\cdot xyz-x\cdot yz)\otimes 1.
$$
$\Phi$ is in $E^*\otimes H^*$ and its full symmetrisation is
\begin{align*}
& \quad (-1)^{|x|(|y|+|z|)} (1\cdot yz\cdot x-y\cdot z\cdot x)-(1\cdot xy\cdot z-x\cdot y\cdot z) \\
& \quad -(1\cdot xyz\cdot 1-xy\cdot z\cdot 1)+(1\cdot xyz\cdot 1-x\cdot yz\cdot 1) \\
& =0.
\end{align*}
Therefore, $\Phi\in K[E^*\otimes H^*]$ and $\gamma\alpha(\Phi)=0$ by hypothesis.

Now let $\Psi$ be a closed element in $\Lambda(V_0^*\oplus V_1^*)$ of lower index 1. So $\Psi\in V_1^*\cdot\Lambda V_0^*$. Choose a basis $\{ x_i\}$ of $H^+$, then $\Psi$ can be written as a linear combination of $w_{(x_{i_1}\cdot x_{i_2})}v_{x_{i_3}}\ldots v_{x_{i_l}}, l\geq 2$. Suppose $k$ is the largest integer such that $\Psi$ has a term in $V_1\cdot\Lambda^k V_0^*$, and there are $m_k$ terms in $V_1^*\cdot\Lambda^k V_0^*$ under this writing.

When $k\geq 1$, without loss of generality we may assume that $\Psi$ has a term $w_{(x_1\cdot x_2)}v_{x_3}\ldots v_{x_{k+2}}$. As $\Psi$ is closed, there must be another term such that its differential contains $v_{x_1}\ldots v_{x_{k+2}}$. Moreover, this term is also in $V_1\cdot\Lambda^k V_0$ since $dV_1\cdot\Lambda^j V_0\subset \Lambda^{\leq j+2}V_0$ for all $j\geq 0$. So it is of the form $Cw_{(x_{i_1}\cdot x_{i_2})}v_{x_{i_3}}\ldots v_{x_{i_{k+2}}}$, where $\{i_1,\ldots,i_{k+2}\}$ is a reshuffle of $\{1,\ldots,k+2\}$ and $C\in\mathbb{K}$ is a constant. Then either one of $1,2$ is in $\{i_1,i_2\}$, or $1,2\notin\{i_1,i_2\}$.

\textbf{Case 1.} $\{1,2\}\cap\{i_1,i_2\}\neq\emptyset$.

We may assume that the other term is $Cw_{(x_2\cdot x_3)}v_{x_1}v_{x_4}\ldots v_{x_{k+2}}$. Set
$$
\Psi'=\Psi+\big((-1)^{|x_1|}v_{x_1} w_{({x_2}\cdot {x_3})}-w_{(x_1\cdot x_2)} v_{x_3}-w_{(x_1x_2\cdot x_3)}+w_{(x_1\cdot x_2x_3)}\big)v_{x_4}\ldots v_{x_{k+2}}.
$$
Then $d\Psi'=d\Psi=0$ and $f(\Psi')=f(\Psi)$. Moreover, $\Psi'$ has at most $m_k-1$ terms in $V_1\cdot\Lambda^k V_0$.

\textbf{Case 2.} $\{1,2\}\cap\{i_1,i_2\}=\emptyset$.

We may assume that the other term is $Cw_{(x_3\cdot x_4)}v_{x_1}v_{x_2}v_{x_5}\ldots v_{x_{k+2}}$. Set
\begin{align*}
\Psi' &= \Psi+\big((-1)^{|x_1|}v_{x_1} w_{({x_2}\cdot {x_3})}-w_{(x_1\cdot x_2)} v_{x_3}-w_{(x_1x_2\cdot x_3)}+w_{(x_1\cdot x_2x_3)}\big)v_{x_4}\ldots v_{x_{k+2}} \\
&\quad +(-1)^{|x_1|}v_{x_1}\big((-1)^{|x_2|}v_{x_2} w_{({x_3}\cdot {x_4})}-w_{(x_2\cdot x_3)} v_{x_4}-w_{(x_2x_3\cdot x_4)}+w_{(x_2\cdot x_3x_4)}\big)v_{x_5}\ldots v_{x_{k+2}}.
\end{align*}
Then we also have $d\Psi'=d\Psi=0$, $f(\Psi')=f(\Psi)$, and $\Psi'$ has at most $m_k-1$ terms in $V_1\cdot\Lambda^k V_0$.

Continue this process we can finally find some closed $\Psi''\in V_1^*$ such that $f(\Psi'')=f(\Psi)$. But by construction the only closed element in $V_1^*$ is 0. So $\Psi''=0$ and $f(\Psi)=0$. The claim is proved.

Therefore, we can set $f(V_2^*)=0$. On the other hand, $f(V_1^*) \subset \im\gamma$. By hypothesis  $f(\Lambda^2 V_1^*)\subset \im\gamma^2=0$. Hence, $f=0$ on $\Lambda(V_{\leq 2}^*)_2$ and we can set $f(V_3^*)=0$. Inductively we have $f=0$ on $\Lambda(V_{\leq p}^*)_p$ and set $f(V_{p+1}^*)=0$ for $p\geq 2$.

It remains to verify that $f$ is a quasi-isomorphism. For each $x\in H^+$, there exists some closed $v_x\in V_0^*$ such that the cohomology class $[f(v_x)]=[\alpha(x)]=x$. Together with $f(1)=1$ we have proved the surjectivity.

For injectivity, suppose that $\Omega\in\Lambda V$ is closed and $f(\Omega)$ is exact. Observe that $d$ decrease the lower index by 1 homogeneously and all the closed elements of positive lower index are exact. So the cohomology class $[\Omega]$ has a representative in $\Lambda V_0^*$. Moreover, by construction for any $x,y\in H^+$, the cohomology class $[v_{xy}]=[v_xv_y]$. It follows that $[\Omega]$ has a representative in either $V_0^*$ or $\mathbb{K}$. But $f$ acting on $V_0^*\oplus\mathbb{K}$ is non-exact except $f(0)$. So this representative is 0 and $\Omega$ is exact.

Therefore, $\phi$ and $f$ give a CDGA equivalence between $H^*$ and $\mathcal{A}^*$. 
\end{proof}

\begin{rmk}
We can also construct an $A_{\infty}$-quasi-isomorphism from $H^*$ to $\mathcal{A}^*$, by setting $f_1=\alpha$, $f_2(x,y)=\gamma\big((1\cdot xy-x\cdot y)\big)$, and $f_p=0$ for all $p\geq 3$.
\end{rmk}

The CDGA $\mathcal{A}_{\omega}$ in Lemma \ref{vanishing uniform massey}, satisfies the hypothesis of Lemma \ref{vanishing uniform massey to formal}. So $\mathcal{A}_{\omega}$ is formal and we have the following theorem.
\begin{thm}\label{B-M tensor to formality}
Let $\mathcal{A}$ be a formal CDGA, and $\mathcal{A}_{\omega}=\mathcal{A}\otimes \Lambda\theta$ be a Poincar\'e algebra where $\theta^2=0$ and $d\theta=\omega\in\mathcal{A}$. Then $\mathcal{A}_\omega$ is formal if and only if its Bianchi-Massey tensor vanishes.
\end{thm}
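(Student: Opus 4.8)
The plan is to split the biconditional into its two implications, dispatching the necessity of the vanishing first because it is general and requires neither the Poincar\'e hypothesis nor the formality of $\mathcal{A}$, and then treating the sufficiency, which is where the two preceding lemmas and the Poincar\'e structure do the real work. Throughout I will use that both formality and the Bianchi--Massey tensor are invariants of the quasi-isomorphism type of a CDGA, the latter by \cite{cn}.

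For the direction ``$\mathcal{A}_\theta$ formal implies $\mathcal{F}=0$'' I would argue directly from the definition. If $\mathcal{A}_\theta$ is formal it is quasi-isomorphic to $(H^*(\mathcal{A}_\theta),0)$, and on the latter I may compute $\mathcal{F}$ with the tautological right inverse $\alpha=\mathrm{id}$. Since for $e\in E^*=\ker c$ one has $\alpha^2(e)=c(e)=0$, the map $\alpha^2$ vanishes identically on $E^*$, so $\gamma=0$ is an admissible choice and the defining formula for $\mathcal{F}$ gives $\mathcal{F}\equiv 0$. Invariance of the Bianchi--Massey tensor then transports this vanishing back to $\mathcal{A}_\theta$ itself. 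Equivalently, formality yields an $A_\infty$-minimal model with $m_3=0$, and $\mathcal{F}$ is precisely the invariant recording $m_3$.

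For the converse I first reduce to the case $d=0$. Because $\mathcal{A}$ is formal, the zigzag $\mathcal{A}\simeq H^*(\mathcal{A})$ extends, as explained just before Lemma~\ref{vanishing uniform massey}, to a zigzag $\mathcal{A}_\theta\simeq H^*(\mathcal{A})\otimes\Lambda\theta$ with $d\theta=[\omega]$; hence it suffices to establish formality of $H^*(\mathcal{A})\otimes\Lambda\theta$, a CDGA of the form handled by the two lemmas. Assuming $\mathcal{F}$ vanishes, in particular its restriction to the top piece $\mathcal{B}^{n+1}(\mathcal{A}_\theta)$ vanishes, so the Poincar\'e hypothesis lets me invoke Lemma~\ref{vanishing uniform massey} to obtain a pair $(\alpha,\gamma)$ with $\im\gamma\subset\mathbf{I}(\theta)$ and $\mathcal{T}\equiv 0$. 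Feeding exactly this pair into Lemma~\ref{vanishing uniform massey to formal} produces the desired $A_\infty$-quasi-isomorphism from $H^*(\mathcal{A})\otimes\Lambda\theta$, establishing its formality and hence that of $\mathcal{A}_\theta$.

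Since the two lemmas already contain the hard constructions, the theorem is essentially their synthesis, and the only genuinely delicate point at this level is the reduction to trivial differential. I expect the main thing to justify carefully to be the quasi-isomorphism invariance of the Bianchi--Massey tensor --- needed both to transport the hypothesis $\mathcal{F}=0$ from $\mathcal{A}_\theta$ to $H^*(\mathcal{A})\otimes\Lambda\theta$ and to run the necessity direction --- together with the compatibility, noted before Lemma~\ref{vanishing uniform massey}, of the extended zigzag with the $\theta$-extension. It is also worth emphasizing that the Poincar\'e hypothesis enters \emph{only} through Lemma~\ref{vanishing uniform massey}, where it is what upgrades the top-degree vanishing of the uniform Massey product to vanishing in all degrees; once these invariances are in place, no further computation is required beyond citing the two lemmas.
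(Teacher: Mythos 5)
Your proposal is correct and takes essentially the same route as the paper: the paper's proof of this theorem is exactly the synthesis you describe, namely the reduction to the trivial-differential CDGA $H^*(\mathcal{A})\otimes\Lambda\theta$ via the extended zigzag, followed by Lemma \ref{vanishing uniform massey} and Lemma \ref{vanishing uniform massey to formal}, with the necessity direction and the quasi-isomorphism invariance of the Bianchi--Massey tensor taken as known from \cite{cn}. Your write-up simply makes those background facts explicit.
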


Let $\pi:X\to M$ be an orientable $S^k$-bundle. We have the following CDGA equivalence.
\begin{align*}
\Omega^*(X)\simeq
\begin{cases}
\Omega^*(M)\otimes\Lambda(\theta), \quad\, d\theta=e, & k\text{ is odd,}\\
\Omega^*(M)\otimes\Lambda(\theta,\theta'), \, d\theta=0, d\theta'=\theta^2+\frac{1}{4}p, & k\text{ is even.}
\end{cases}
\end{align*}
Here $|\theta|=k$, and $|\theta'|=2k-1$. $[e]\in H^{k+1}(M)$ is the Euler class, and $[p]\in H^{2k}(M)$ is the rational $2k$-th Pontryagin class of the sphere bundle.

When $M$ is simply connected, the proof can be found in \cite[Example 4, Page 202]{FHT}. For general manifolds, the case that $k$ is odd is proved in \cite[Appendix]{tt}, and a similar proof works when $k$ is even. Moreover, this equivalence still holds when $X$ is a spherical Serre fibration, as long as $\pi_1(M)$ acts nilpotently on $H^*(S^k)$ \cite[Theorem 20.3]{Halperin}, which is equivalent to the holonomy action of $\pi_1(M)$ on the fiber $S^k$ preserves the orientation.

Therefore, when $M$ is compact, orientable and formal, $\Omega^*(X)$ is equivalent to a CDGA $\mathcal{A}_{\omega}$ satisfying the hypothesis of Theorem \ref{B-M tensor to formality}. When $k$ is odd, $\mathcal{A}=H^*(M)$ and $\omega=[e]$. When $k$ is even, $\mathcal{A}=H^*(M)\otimes\Lambda(\theta)$, $\mathcal{A}_{\omega}=\mathcal{A}\otimes\Lambda(\theta')$, and $\omega=\theta^2+\frac{1}{4}[p]$. In both cases the formality of $X$ is determined by the Bianchi-Massey tensor of $\mathcal{A}_{\omega}$.

Moreover, if $k$ is even, the kernel of multiplying by $\omega=\theta^2+\frac{1}{4}[p]$ in $\mathcal{A}=H^*(M)\otimes\Lambda(\theta)$ is 0. By the discussion in the third paragraph of the proof of Lemma \ref{vanishing uniform massey}, the uniform Massey product $\mathcal{T}$ of $\mathcal{A}_{\omega}=H^*(M)\otimes\Lambda(\theta,\theta')$ is trivial as $\ker\omega=0$. Then it is formal by Lemma \ref{vanishing uniform massey to formal}. Thus, we have the following statement.

\begin{thm}
Suppose $M$ is a compact orientable formal manifold, and $\pi:X\to M$ is an orientable $S^k$-bundle. Then $X$ is formal if and only if the Bianchi-Massey tensor of $\Omega^*(X)$ vanishes. Moreover, when $k$ is even, $X$ is always formal.
\end{thm}

\subsection{A special case: Euler class is of the top degree}

In this subsection, we will consider the case that the Euler class is a top degree cohomology class of a formal manifold. An example of such bundle is the unit tangent bundle. Let $M$ be a compact orientable formal manifold. Equip the tangent bundle $TM$ with a metric, then the vectors of norm 1 form a sphere bundle $UTM$, whose Euler class $\chi(M)[\omega]$. Here $\chi(M)$ is the Euler Characteristic and $\omega$ is a volume form. $UTM$ is called the unit tangent bundle of $M$.

We will explore when such bundles are formal. When the base is of odd dimension, the fiber is an even dimensional sphere. By the discussion of the previous subsection, we already know that the bundle is formal. So the non-trivial case only happens on even dimensional base manifolds.

\begin{lem}
Suppose $\mathcal{A}$ is a $2n$-dimensional Poincar\'e CDGA with trivial differential, and $\omega\in\mathcal{A}^{2n}$ be non-zero. We also assume that $\mathcal{A}^i=0$ if $i<0$, and $\mathcal{A}^0=\mathbb{K}$. If $\mathcal{A}_{\omega}$ is formal, then the multiplication map $\mathcal{A}^i\otimes\mathcal{A}^j\to\mathcal{A}^{i+j}$ is injective for all $i,j\leq n$.
\end{lem}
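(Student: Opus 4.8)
The plan is to prove the contrapositive: if the multiplication $\mathcal{A}^i\otimes\mathcal{A}^j\to\mathcal{A}^{i+j}$ fails to be injective for some $i,j\le n$, then $\mathcal{A}_\theta$ is non-formal. Since the Bianchi-Massey tensor is a well-defined invariant that vanishes on every formal CDGA, it suffices to exhibit a single element of $\mathcal{B}^*(\mathcal{A}_\theta)$ on which $\mathcal{F}$ does not vanish. I will work with the description $H^*(\mathcal{A}_\theta)\simeq\operatorname{coker}\omega\oplus\theta\ker\omega$ from the start of the subsection, whose top class is $[\theta\omega]$ in degree $4n-1$; thus the relevant obstruction lives in $\mathcal{F}:\mathcal{B}^{4n}(\mathcal{A}_\theta)\to H^{4n-1}(\mathcal{A}_\theta)$, and I aim for a value that is a nonzero multiple of $[\theta\omega]$. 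Throughout I take $\alpha$ to be the inclusion on the $\mathcal{A}$-classes, so that for closed representatives $a,a'\in\mathcal{A}$ with $aa'=c\omega\in\mathbb{K}\omega$ one may take $\gamma\big(([a]\cdot[a'])\big)=c\theta$, while $\gamma=0$ whenever $aa'=0$ in $\mathcal{A}$.

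I distinguish two cases according to whether some interior piece is at least two-dimensional. Suppose first that $\dim\mathcal{A}^d\ge 2$ for some even $d$ with $2\le d\le 2n-2$; this is in particular forced, via $x\otimes y-y\otimes x$, whenever a map $\mathcal{A}^i\otimes\mathcal{A}^i\to\mathcal{A}^{2i}$ is non-injective. I pick linearly independent $x,y\in\mathcal{A}^d$ and, using the perfect Poincar\'e pairing $\mathcal{A}^d\otimes\mathcal{A}^{2n-d}\to\mathbb{K}\omega$, a biorthogonal pair $x^*,y^*\in\mathcal{A}^{2n-d}$ with $xx^*=yy^*=\omega$ and $xy^*=yx^*=0$. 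Then $p=([x]\cdot[x^*])$, $q=([y]\cdot[y^*])$, $s=([x]\cdot[y^*])$, $t=([y]\cdot[x^*])$ all lie in $E^{2n}$ (their products vanish in $H^{2n}(\mathcal{A}_\theta)=0$), and $(p\cdot q)-(s\cdot t)$ has vanishing full symmetrisation, so it lies in $\mathcal{B}^{4n}$. A direct computation gives $\mathcal{F}\big((p\cdot q)-(s\cdot t)\big)=2\big(1\cdot1-0\cdot0\big)[\theta\omega]\neq 0$.

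In the remaining case every interior $\mathcal{A}^d$ is at most one-dimensional, so any non-injectivity with $i,j\le n$ must come from a genuine zero divisor: nonzero $a\in\mathcal{A}^i$, $b\in\mathcal{A}^j$ with $ab=0$, and necessarily $i+j<2n$, since $i=j=n$ is excluded by non-degeneracy of the middle pairing. Choosing Poincar\'e duals $a^*\in\mathcal{A}^{2n-i}$, $b^*\in\mathcal{A}^{2n-j}$ with $aa^*=bb^*=\omega$, I set $Q=([a]\cdot[a^*])\cdot([b]\cdot[b^*])$ and $R=([a]\cdot[b])\cdot([a^*]\cdot[b^*])$. Here the relation $ab=0$ guarantees $([a]\cdot[b])\in E^*$ with $\gamma=0$, and the degree count $i+j<2n$ forces $a^*b^*=0$, so that $([a^*]\cdot[b^*])\in E^*$ with $\gamma=0$ as well. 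Both $Q$ and $R$ symmetrise to the same element $[a]\cdot[a^*]\cdot[b]\cdot[b^*]$ of $\mathcal{G}^4H^*(\mathcal{A}_\theta)$, whence $Q-R\in\mathcal{B}^{4n}$, and since $R$ is built from two factors of trivial $\gamma$ one finds $\mathcal{F}(Q-R)=\mathcal{F}(Q)=2[\theta\omega]\neq 0$.

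The conceptual point, and the step I expect to need the most care, is this last mechanism: neither the relation-class $([a]\cdot[b])$ nor the Poincar\'e square $([a]\cdot[a^*])\cdot([b]\cdot[b^*])$ \emph{alone} detects non-formality, because the former has trivial $\gamma$ while the latter does not lie in the kernel of full symmetrisation. The obstruction is visible only in the combination $Q-R$, where the zero-divisor relation supplies exactly the correcting term of trivial $\gamma$ needed to land inside $\mathcal{B}^{4n}$ while leaving the $\theta\omega$-contribution of $Q$ untouched. I would therefore devote the bulk of the proof to verifying that all four tensor factors genuinely lie in $E^*$ and that the two full symmetrisations coincide, as this is precisely where the hypotheses $i,j\le n$ (hence $i+j<2n$) and the Poincar\'e structure of $\mathcal{A}$ are really used.
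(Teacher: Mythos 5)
Your proof is correct, and its engine is the same as the paper's: you pass to the contrapositive and exhibit an explicit class in $\mathcal{B}^{4n}(\mathcal{A}_\theta)$ of the shape (product of two Poincar\'e-dual pairs) minus (relation class times dual-product class), choose $\alpha$ to be the inclusion on $\mathcal{A}$-classes and $\gamma$ equal to $\theta$ on the dual pairs and $0$ on the relation classes, and evaluate $\mathcal{F}=\pm 2[\theta\omega]\neq 0$. (Your prescriptions for $\gamma$ are mutually consistent because they are restrictions of the single linear map $\gamma=\theta\circ\omega^{-1}\circ\alpha^2$ used earlier in this section, so this point, which you leave implicit, is not a gap.) The only genuine difference is organizational: the paper treats an arbitrary kernel element $\sum_{i=1}^k x_i\otimes y_i$ in one stroke, after arranging $\{x_i\}$ and $\{y_i\}$ to be linearly independent and choosing $x_1^*,y_1^*$ with $x_ix_1^*=y_iy_1^*=\delta_{1i}\omega$, whereas you split according to whether some interior $\mathcal{A}^d$ has dimension at least $2$. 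Your Case 1 element $(p\cdot q)-(s\cdot t)$ is exactly the paper's element applied to the canonical antisymmetric kernel element $x\otimes y-y\otimes x$, and your Case 2 element $Q-R$ is the paper's element with $k=1$, which is the only possibility once all interior pieces are lines. Your arrangement buys freedom from the rank-$k$ bookkeeping and, incidentally, the stronger fact (Case 1) that $\dim\mathcal{A}^d\geq 2$ for any $0<d<2n$ already obstructs formality of $\mathcal{A}_\theta$ --- a fact the paper only extracts by combining this lemma with the next one; the paper's version buys a single uniform construction with no case analysis.
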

\begin{proof}
If $i$ or $j=0$, the multiplication map is an isomorphism. Assume that the multiplication map has a non-trivial kernel in $\sum_{r=1}^k x_r\otimes y_r\in\mathcal{A}^i\otimes\mathcal{A}^j$, where $0<i,j\leq n$. $\{ x_r \}$ can be chosen linearly independent with $k\geq 1$. A similar procedure can make $\{ y_r \}$ linearly independent: If $y_r=c_1y_1+\ldots +c_{r-1}y_{r-1}$, $x_1\otimes y_1+\ldots+x_r\otimes y_r$ can be rewritten as $(x_1+c_1x_r)\otimes y_1+\ldots+(x_{r-1}+c_{r-1}x_r)\otimes y_{r-1}$. $\{ x_1+c_1x_r,\ldots,x_{r-1}+c_{r-1}x_r \}$ is also linearly independent.

Take $x_1^*\in\mathcal{A}^{2n-i},y_1^*\in\mathcal{A}^{2n-j}$ such that $x_rx_1^*=y_ry_1^*=\delta_{1r}\omega$. Then $n\leq x_1^*,y_1^*<2n$. Hence, $x_1^*,y_1^*$ are non-exact in $\mathcal{A}_{\omega}$, but the products $x_rx_1^*,y_ry_1^*$ are either 0 or $\omega$, which are both exact in $\mathcal{A}_{\omega}$. So we can set $\alpha:H^*(\mathcal{A}_{\omega})\to\mathcal{A}_{\omega}$ such that
$$
\alpha([x_r])=x_r, \quad \alpha([y_r])=y_r, \quad \alpha([x_1^*])=x_1^*, \quad \alpha([y_1^*])=y_1^*,
$$
and $\gamma:E^*(\mathcal{A}_{\omega})\to\mathcal{A}_{\omega}$ such that
$$
\gamma\left(\sum_{r=1}^k[x_r]\cdot[y_r]\right)=0, \quad \gamma([x_r]\cdot[x_1^*])=\gamma([y_r]\cdot[y_1^*])=\delta_{1r}\theta.
$$
Then $\left( (\sum_{r=1}^k[x_r]\cdot[y_r])\cdot ([x_1^*]\cdot[y_1^*]) \right)-(-1)^{j(2n-i)}\sum_{r=1}^k \Big( ([x_r]\cdot[x_1^*])\cdot ([y_r]\cdot[y_1^*]) \Big)$ is in $K[\mathcal{G}^2\mathcal{G}^2 H^*(\mathcal{A}_{\omega})]$, and
\begin{align*}
& \quad \mathcal{F}\left( \left( (\sum_{r=1}^k[x_r]\cdot[y_r])\cdot ([x_1^*]\cdot[y_1^*]) \right)-(-1)^{j(2n-i)}\sum_{r=1}^k \Big( ([x_r]\cdot[x_1^*])\cdot ([y_r]\cdot[y_1^*]) \Big) \right) \\
&= -(-1)^{ij}[\gamma ([x_1]\cdot[x_1^*])y_1y_1^*]-(-1)^{ij}(-1)^{2n\cdot 2n}[\gamma ([y_1]\cdot[y_1^*])x_1x_1^*] \\
&= -(-1)^{ij}2[\theta\omega].
\end{align*}
Therefore, $\mathcal{A}_{\omega}$ is non-formal.
\end{proof}

\begin{lem}
Suppose $H^*$ is a $2n$-dimensional Poincar\'e graded algebra, $H^0=\mathbb{K}$, and $H^i$ is nontrivial only when $0\leq i\leq 2n$. If the multiplication map $H^i\otimes H^j\to H^{i+j}$ is injective for all $i,j\leq n$, then $H^*=\mathbb{K}[x]/(x^p)$ is a quotient of the polynomial ring with a single variable.
\end{lem}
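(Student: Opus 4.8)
The plan is to prove the statement in two stages: first, that every graded piece of $H^*$ is at most one-dimensional, and second, that the support of $H^*$ is a single arithmetic progression and that $H^*$ is generated by a single element of lowest positive degree. Throughout I will exploit that $H^*$ is concentrated in even degrees, so it is honestly commutative and there are no signs to track. Write $b_k=\dim H^k$; Poincar\'e duality gives $b_k=b_{2n-k}$ and $b_0=b_{2n}=1$, and the hypothesis says $b_ib_j\le b_{i+j}$ whenever $i,j\le n$ (injectivity of the product on $H^i\otimes H^j$).

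For the first stage I would locate the relevant ``middle'' degree. If $n$ is even, injectivity of $H^n\otimes H^n\to H^{2n}$ forces $b_n^2\le b_{2n}=1$, hence $b_n\le 1$; then for every even $i\le n$, injectivity of $H^i\otimes H^{n-i}\to H^n$ gives $b_ib_{n-i}\le b_n\le 1$, so $b_i\le 1$, and Poincar\'e duality propagates this to all degrees. If $n$ is odd (so $H^n=0$), I would run the same argument with the central even degree $n-1$: combining $H^{n-1}\otimes H^{n-1}\to H^{2n-2}$ with the duality $b_{2n-2}=b_2$, and $H^2\otimes H^{n-1}\to H^{n+1}$ with $b_{n+1}=b_{n-1}$, one first gets $b_2\le 1$ and then $b_{n-1}^2\le b_2\le 1$, i.e. $b_{n-1}\le 1$; splitting $n-1=i+(n-1-i)$ and using duality finishes. (The tiny cases, e.g. $2n\le 2$, are immediate.) This stage is clean and I do not expect difficulty here.

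For the second stage, set $S=\{k:H^k\neq 0\}$, which is symmetric about $n$ and contains $0,2n$; let $d=\min(S\setminus\{0\})$ and fix a generator $x$ of the now one-dimensional space $H^d$. Because each piece is one-dimensional, injectivity says the multiplication $x\cdot\colon H^j\to H^{j+d}$ is injective for $j\le n$, and since the Poincar\'e pairing is perfect and $\langle xc,w\rangle=\langle c,xw\rangle$, its transpose $x\cdot\colon H^{2n-j-d}\to H^{2n-j}$ is surjective; thus $x\cdot$ is surjective in degrees $\ge n-d$. Moreover products of generators in degrees $\le n$ are nonzero, so $S\cap[0,n]$ is closed under addition. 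I would then show $g:=\gcd(S\cap[0,n])$ satisfies $g\mid 2n$ and $g=d$: since $S\cap[0,n]$ contains all large enough multiples of $g$ below $n$, closure produces a degree $2mg$ just below $2n$, whose Poincar\'e reflection $2n-2mg$ lands in $[0,n]$ with residue $2n\bmod g$, forcing $g\mid 2n$; and writing $2n-g$ as a sum of two large multiples of $g$ in $S\cap[0,n]$ puts $2n-g$, hence by duality $g$ itself, into $S$, so $g\ge d$ and therefore $g=d$. It follows that $S=\{0,d,2d,\dots,2n\}$; each consecutive multiplication by $x$ is an isomorphism (injective below the middle, surjective above), so $x^j$ spans $H^{jd}$ and $H^*\cong\mathbb{K}[x]/(x^p)$ with $p=2n/d+1$.

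The main obstacle is precisely this arithmetic core of the second stage, namely that multiplicative closure together with Poincar\'e symmetry truly collapses the support onto a single lattice $d\mathbb{Z}$ rather than a union of residue classes. The delicate feature is that injectivity only licenses adding degrees that are each $\le n$ (products overshooting the top degree are entirely unconstrained), so one cannot simply invoke that $S\cap[0,n]$ is a numerical semigroup and be done; instead one must bounce between the two halves through Poincar\'e duality to manufacture, from any hypothetical off-lattice class, a positive degree strictly smaller than $d$, contradicting the minimality of $d$. I would isolate the inequalities $g\mid 2n$ and $g=d$ as the two key claims, prove them by the reflection-and-closure bookkeeping sketched above, and dispatch the finitely many genuinely small configurations (those with too few nonzero degrees for the semigroup estimates to apply) by direct inspection.
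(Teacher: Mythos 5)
Your two-stage skeleton (first, every graded piece is at most one-dimensional; second, the support is a single arithmetic progression generated by the lowest positive degree) is exactly the paper's, but both stages as written contain genuine gaps. In stage one, the step ``injectivity of $H^i\otimes H^{n-i}\to H^n$ gives $b_ib_{n-i}\le b_n\le 1$, so $b_i\le 1$'' is invalid whenever $b_{n-i}=0$: injectivity of a map out of the zero space carries no information, so nothing here excludes, say, $b_{n-i}=0$ together with $b_i=7$. The same defect recurs in your odd-$n$ variant ($b_2b_{n-1}\le b_{n+1}=b_{n-1}$ yields $b_2\le 1$ only if $b_{n-1}\neq 0$). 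Dimension counting through a fixed middle degree is the wrong tool; the paper uses the commutativity you noted but did not exploit: for $0<i\le n$, two linearly independent classes $x,y\in H^i$ give the nonzero element $x\otimes y-y\otimes x$ in the kernel of $H^i\otimes H^i\to H^{2i}$ (degrees are even, so $xy=yx$), contradicting injectivity, and Poincar\'e duality then handles $i>n$. This fix is easy, but it is a different argument from the one you wrote.

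The stage-two gap is structural. Your two key claims rest on ``$S\cap[0,n]$ contains all large enough multiples of $g$ below $n$,'' i.e.\ on $n$ exceeding the Frobenius number of the numerical semigroup generated by $S\cap(0,n]$. That is not known a priori, and the fallback of dispatching ``finitely many genuinely small configurations by direct inspection'' mischaracterizes the situation: for every $n$ there are a priori admissible supports with generators comparable to $n$ and small gcd (e.g.\ $S\cap(0,n]=\{a,b\}$ with $\gcd(a,b)=g$ and $ab/g>n$) for which the semigroup estimate never applies, so the exceptional set is not finite in any sense independent of $n$, and ruling those configurations out is precisely the content of the lemma --- at its core the proposal restates the difficulty rather than resolving it. (The claim $g\mid 2n$ is independently salvageable: the largest $s^*\in S\cap[0,n]$ satisfies $s^*\ge n/2$, since otherwise $2s^*\in S\cap[0,n]$ would exceed it; then $2s^*\in S$, $2n-2s^*\in S\cap[0,n]$, and $g$ divides both $2s^*$ and $2n-2s^*$. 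It is the step $g=d$, i.e.\ $g\in S$, that your sketch does not reach.) The paper avoids gcds and all largeness hypotheses: working directly with the minimal positive degree $k$, it first pins $2n$ to be $2lk$ or $(2l+1)k$ by ruling out the strict inequalities $2lk<2n<(2l+1)k$ and $(2l+1)k<2n<(2l+2)k$, and then, from any hypothetical support degree $r$ not divisible by $k$, it manufactures --- via products $H^r\otimes H^{ik}\to H^{r+ik}$ and $H^{lk}\otimes H^s\to H^{lk+s}$ with all factors in degrees $\le n$, interleaved with Poincar\'e reflections $s\mapsto 2n-s$ --- a nonzero degree strictly between $0$ and $k$, contradicting minimality. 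Every step there is a finite explicit construction valid uniformly in $n$, which is exactly what your semigroup estimates cannot supply.
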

\begin{proof}
First observe that $\dim H^{2i}\leq 1$ and $\dim H^{2i+1}=0$. If $x,y\in H^{2i}$ are linearly independent for some $0<2i\leq n$, then $x\otimes y-y\otimes x$ will be a non-trivial element in the kernel of the multiplication map $H^{2i}\otimes H^{2i}\to H^{4i}$. If $z\in H^{2i+1}$ is non-zero for some $0<2i+1\leq n$, then $z\otimes z$ will be a non-trivial element in the kernel of the multiplication map $H^{2i+1}\otimes H^{2i+1}\to H^{4i+2}$. As $H^*$ is $2n$-dimensional Poincar\'e, for $n<2i\leq 2n$ we have $\dim H^{2i}=\dim H^{2n-2i}\leq 1$, and for $n<2i+1\leq 2n$ we have $\dim H^{2i+1}=\dim H^{2n-2i-1}=0$. 

Let $S=\{ i\in\mathbb{Z} \,| \dim H^i=1 \}$, $k$ be the smallest positive integer in $S$, and $T$ be the image of $S$ under the projection $\mathbb{Z}\to\mathbb{Z}/k\mathbb{Z}$. For $i,j\in S$ with $i,j\leq n$, the hypothesis that the multiplication $H^i\otimes H^j\to H^{i+j}$ is injective implies $i+j\in S$. In particular, $ik\in S$ for $0\leq i\leq 2[\frac{n}{k}]$. Let $x$ be a generator of the vector space $H^k$, then $x^i$ generates $H^{ik}$ for such $i$.

We claim that every $a\in T$ has a representative $m\leq n$ in $S$. Indeed, when $m>n$ is a representative of $a$ with $m\in S$, we have $2n-m<n$ and $2n-m\in S$ as $H^*$ is Poincar\'e. On the other hand, $[\frac{n}{k}]k\leq n$ is also in $S$ as discussed in last paragraph, so $(2n-m)+[\frac{n}{k}]k \in S$. By the definition of $k$, we have $2n-m\geq k$. Then $(2n-m)+[\frac{n}{k}]k \geq (1+[\frac{n}{k}])k>n$. Thus, we can set $m'=2n-\big( (2n-m)+[\frac{n}{k}]k \big)=m-[\frac{n}{k}]k$, which is also a representative of $a$. Moreover, $m'\in S$ and $m'<n$.

For arbitrary $a_1,a_2\in T$, there are respective representatives $m_1,m_2\leq n$ in $S$. Then $m_1+m_2\in S$ and $a_1+a_2\in T$. Since the finite subset $T$ of $\mathbb{Z}/k\mathbb{Z}$ is closed under addition, it is a subgroup. So $T=\mathbb{Z}/c\mathbb{Z}$ for some positive integer $c$, and $\frac{k}{c}$ is the number of elements in $T$.

Choose the largest $m\leq n$ in $S$. We claim that $n-m<c$. Otherwise, the class of $m+c$ in $\mathbb{Z}/k\mathbb{Z}$ is also in $T$, and it has a representative $m'\leq n$ in $S$. By assumption, $m'\leq m<m+c$. So $m+c-m'$ is a positive integer divisible by $k$. Moreover, $m+c-m'\leq m+c<n< (1+[\frac{n}{k}])k\leq 2[\frac{n}{k}]k$ as $[\frac{n}{k}]\geq 1$ clearly. Hence, $m+c-m'\in S$. It follows that $m+c=m'+(m+c-m')\in S$, which is a contradiction.

If $T\neq\{0\}$, the number of its elements $\frac{k}{c}\geq 2$. Together with $n-m<c$ we have $2n-2m<k$. On the other hand, $2m\in S$ as $m<n$ is in $S$, then $2n-2m\in S$, which contradicts to the hypothesis of $k$. So $T$ has to be $\{0\}$ and $k$ divides $2n$.

When $k$ divides $n$, $2[\frac{n}{k}]k=2n$. The discussion in the second paragraph shows that $H^*=\langle x^i| 0\leq|x^i|\leq 2n\rangle$. When $k$ does not divide $n$, $(2[\frac{n}{k}]+1)k=2n$. In the special case that $k=2n$, $H^*=H^0\oplus H^{2n}$. So any generator of $x$ the space $H^{2n}$ makes $H^*=\mathbb{K}[x]/(x^2)$. In the general case, $H^k,H^{2k},\ldots,H^{2n-k}$ are all generated by some $x^i$, and it remains to verify that the generator of $H^{2n}$ is also a power of $x$. Since $H^*$ is Poincar\'e, this generator is the product of $x$ and a non-trivial element in $H^{2n-k}$. But the only generator of $H^{2n-k}$ is $x^i$ with $i=2[\frac{n}{k}]$, so $x^{i+1}$ is a generator of $H^{2n}$. This completes the proof.
\end{proof}

Combining the lemmas above, we have the following theorem.

\begin{thm}\label{formality of extended by top degree}
Suppose that $\mathcal{A}$ is a $2n$-dimensional Poincar\'e CDGA, which is formal and connected. Let $\omega\in\mathcal{A}^{2n}$ such that $[\omega]$ is non-zero. Set $\mathcal{A}_{\omega}=\mathcal{A}\otimes\Lambda\theta$ with $d\theta=\omega$. If $\mathcal{A}_{\omega}$ is formal, then $H^*(\mathcal{A})=\mathbb{K}[x]/(x^p)$ is a quotient of the polynomial ring with a single variable.
\end{thm}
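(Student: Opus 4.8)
The plan is to reduce the formal case to the trivial-differential case already treated by the three preceding lemmas, and then to chain those lemmas together. Since $\mathcal{A}$ is formal and connected, the zigzag of quasi-isomorphisms described at the start of this subsection connects $\mathcal{A}_\theta$ to $H^*(\mathcal{A})\otimes\Lambda\theta$, where the latter carries the differential $d\theta=[\omega]$. Because formality is an invariant of the quasi-isomorphism type, it suffices to work with $H^*:=H^*(\mathcal{A})$ regarded as a CDGA with trivial differential, and to show that formality of $H^*\otimes\Lambda\theta$ forces $H^*=\mathbb{K}[x]/(x^p)$.

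First I would check that $H^*$ satisfies the standing hypotheses of the lemmas. As $\mathcal{A}$ is $2n$-dimensional Poincar\'e, so is $H^*$, and connectedness gives $H^0=\mathbb{K}$; since $H^*$ is concentrated in degrees $0$ through $2n$ we have $H^i=0$ for $i<0$. The class $[\omega]\in H^{2n}$ is by assumption the dual of the Poincar\'e class, so $(H^*,d=0)$ together with $\omega=[\omega]$ is precisely the input for the first lemma. Applying it yields $H^{2i+1}=0$ for all $i$, i.e. the odd cohomology vanishes.

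With the odd part gone, $H^*$ now meets the hypotheses of the second lemma (trivial differential, $H^i=0$ for $i<0$ or $i$ odd, and $H^0=\mathbb{K}$); invoking it shows that the multiplication map $H^i\otimes H^j\to H^{i+j}$ is injective for all $i,j\leq n$. Finally, $H^*$ is a $2n$-dimensional Poincar\'e algebra with $H^0=\mathbb{K}$, nonzero only in even degrees between $0$ and $2n$, and with injective multiplication in the relevant range, which is exactly the hypothesis of the third lemma. That lemma concludes $H^*=\mathbb{K}[x]/(x^p)$, completing the argument.

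In this theorem the substantive work has already been carried out in the three lemmas, so I expect no genuine obstacle in the assembly. The one point demanding care is the initial reduction: one must confirm that the zigzag of quasi-isomorphisms really descends to $H^*\otimes\Lambda\theta$ with the correct differential $d\theta=[\omega]$, and that it preserves both the Poincar\'e duality structure and the identification of $[\omega]$ as the dual of the Poincar\'e class, so that the hypotheses of each lemma are met verbatim. Once this bookkeeping is in place, the three lemmas apply in sequence with no further input.
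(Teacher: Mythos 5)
Your proposal is correct and matches the paper's own argument: the paper likewise reduces to the trivial-differential CDGA $H^*(\mathcal{A})$ via the zigzag of quasi-isomorphisms extended by $\theta$, and then chains the three preceding lemmas exactly as you do. The bookkeeping point you flag (that the reduction preserves the Poincar\'e structure and the identification of $[\omega]$ with the dual of the Poincar\'e class) is indeed the only thing to check, and it holds since these are properties of the cohomology ring, which quasi-isomorphisms preserve.
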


\begin{cor}
Let $M$ be a compact orientable formal manifold. Its unit tangent bundle $UTM$ is formal if and only if one of the following statement holds

1. The Euler characteristic $\chi(M)=0$.

2. $H^*(M)=\mathbb{K}[x]/(x^p)$ is a quotient of the polynomial ring with a single variable.
\end{cor}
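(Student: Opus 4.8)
The plan is to reduce the corollary to the Theorem immediately preceding it, using the CDGA model $\Omega^*(UTM)\simeq\mathcal{A}_\theta:=H^*(M)\otimes\Lambda\theta$ with $d\theta$ equal to the Euler class $\chi(M)[\omega]$, and then to split into cases according to the parity of $\dim M$ and the vanishing of $\chi(M)$.

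I would first dispose of the odd-dimensional case. If $\dim M$ is odd then $\chi(M)=0$, the fibre $S^{\dim M-1}$ is even-dimensional, and $UTM$ is formal by Theorem \ref{intro formality over formal}; since condition 1 also holds, the equivalence is trivially true. So I may assume $\dim M=2n$, in which case $UTM$ is an orientable $S^{2n-1}$-bundle whose Euler class $\chi(M)[\omega]$ lies in $H^{2n}(M)$, with $[\omega]$ a dual of the Poincar\'e class. Because $M$ is compact and formal, $\Omega^*(UTM)$ is equivalent to $\mathcal{A}_\theta=H^*(M)\otimes\Lambda\theta$ with $|\theta|=2n-1$ and $d\theta=\chi(M)[\omega]$.

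For the forward (``only if'') implication I assume $UTM$ is formal; if $\chi(M)=0$ then condition 1 holds, and otherwise $\chi(M)[\omega]$ is a nonzero scalar multiple of the dual of the Poincar\'e class, hence is itself such a dual after rescaling the Poincar\'e class (equivalently, rescaling $\theta$). The preceding Theorem, applied with $\mathcal{A}=H^*(M)$, then yields $H^*(M)=\mathbb{K}[x]/(x^p)$, which is condition 2. For the reverse implication I treat the two conditions in turn. If $\chi(M)=0$ then $d\theta=0$, so $\mathcal{A}_\theta$ has trivial differential and is manifestly formal. If instead $H^*(M)=\mathbb{K}[x]/(x^p)$ while $\chi(M)\neq0$, then $|x|=k_0$ is necessarily even with $(p-1)k_0=2n$, and I would compute $H^*(\mathcal{A}_\theta)$ directly: left multiplication by $[\omega]=x^{p-1}$ has image $\langle x^{p-1}\rangle$ and kernel $\langle x,\dots,x^{p-1}\rangle$, giving $H^*(\mathcal{A}_\theta)\cong\mathbb{K}[u]/(u^{p-1})\otimes\Lambda(z)$ with $u=[x]$ and $z=[\theta x]$ of odd degree $pk_0-1$. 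I would then exhibit the Sullivan minimal model $(\Lambda(u,v,z),\,du=0,\,dv=u^{p-1},\,dz=0)$ together with the quasi-isomorphism $u\mapsto x$, $v\mapsto\theta$, $z\mapsto\theta x$ onto $\mathcal{A}_\theta$, and verify formality through the criterion of Section 2: the only non-closed generator is $v$, and since multiplication by $u^{p-1}$ is injective on $\mathbb{K}[u]\otimes\Lambda(z)$, the ideal $\mathbf{I}(v)$ contains no nonzero closed element. Hence $\mathcal{A}_\theta$, and therefore $UTM$, is formal. Alternatively one may check that the Bianchi--Massey tensor of $\mathcal{A}_\theta$ vanishes and invoke Theorem \ref{intro formality over formal}.

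The main obstacle is precisely this substantive half of the reverse implication, since the preceding Theorem supplies only necessity and not sufficiency: the real content is identifying $H^*(\mathcal{A}_\theta)$ as $\mathbb{K}[u]/(u^{p-1})\otimes\Lambda(z)$ and confirming that the extra generator $z$ can be chosen closed, so that no higher Massey products survive. A minor point to handle is the degenerate value $p=2$, where $M$ is a rational even sphere and $dv=u^{p-1}=u$ is not decomposable; there the acyclic pair $(u,v)$ cancels and the minimal model reduces to $\Lambda(z)$, the model of an odd rational sphere, which is again formal. Combining the two implications gives the stated equivalence.
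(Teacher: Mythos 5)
Your proposal is correct. For the ``only if'' half it follows exactly the route the paper intends: pass to the model $H^*(M)\otimes\Lambda\theta$ with $d\theta=\chi(M)[\omega]$, dispose of odd-dimensional $M$ (even-dimensional fibre) via Theorem \ref{intro formality over formal}, and, when $\chi(M)\neq0$, apply the theorem immediately preceding the corollary after the harmless rescaling that makes the Euler class the dual of the Poincar\'e class. Where you genuinely add content is the ``if'' half for condition 2: that theorem gives only necessity, the zero-Euler-class discussion covers only condition 1, and the paper nowhere records why $H^*(M)=\mathbb{K}[x]/(x^p)$ with $\chi(M)\neq0$ forces formality of $UTM$ --- it is left implicit. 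Your computation supplies this correctly: multiplication by $x^{p-1}$ on $\mathbb{K}[x]/(x^p)$ has kernel $\langle x,\dots,x^{p-1}\rangle$ and image $\langle x^{p-1}\rangle$, so $H^*(\mathcal{A}_\theta)\cong\mathbb{K}[u]/(u^{p-1})\otimes\Lambda(z)$ with $u=[x]$, $z=[\theta x]$; the map $\Lambda(u,v,z)\to\mathcal{A}_\theta$ sending $u\mapsto x$, $v\mapsto\theta$ (up to the nonzero scalar you rightly flag), $z\mapsto\theta x$ is a quasi-isomorphism; and the formality criterion of Section 2.3 applies with $C^*=\langle u,z\rangle$, $N^*=\langle v\rangle$, since injectivity of $u^{p-1}\cdot$ on $\mathbb{K}[u]\otimes\Lambda(z)$ shows the only closed element of $\mathbf{I}(v)$ is $0$. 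Your separate treatment of $p=2$, where $dv=u$ is indecomposable and the acyclic pair contracts to $\Lambda(z)$ (a rational odd sphere), is the right fix for minimality. The alternative you mention --- checking that the Bianchi--Massey tensor of $\mathcal{A}_\theta$ vanishes and quoting Theorem \ref{intro formality over formal} --- is the other paper-internal route to the same sufficiency statement; your explicit minimal model is more self-contained and makes the corollary's ``if and only if'' fully documented.
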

\begin{proof}
If $\chi(M)\neq 0$, then $M$ is an even dimensional manifold and the Euler class of $UTM$ is non-trivial. Theorem \ref{formality of extended by top degree} states that $H^*(M)$ has to be a quotient of the polynomial ring with a single variable.

Conversely, when $\chi(M)=0$, the Euler class of $UTM$ is trivial. It follows that $UTM$ is a trivial bundle and formal. When $H^*(M)=\mathbb{K}[x]/(x^p)$, $\Omega^*(UTM)$ is equivalent to $\left(\mathbb{K}[x]/(x^p)\right)\otimes\Lambda\theta$ with $d\theta=x^{p-1}$. $H^*(UTM)$ is spanned by $1,x,\ldots,x^{p-2},\theta x,\theta x^2,\ldots,\theta x^{p-1}$. Let $\mathcal{M}=\Lambda(u,v,w)$ such that $|u|=|x|$, $|v|=|\theta|$, $|w|=|\theta x|$, $du=dw=0$ and $dv=u^{p-1}$. Then we can define quasi-isomorphisms $\phi:\mathcal{M}\to H^*(UTM)$ and $f:\mathcal{M}\to \left(\mathbb{K}[x]/(x^p)\right)\otimes\Lambda(\theta)$ such that
$$
\phi(u)=f(u)=x,\quad \phi(v)=0,\quad f(v)=\theta,\quad \phi(w)=f(w)=\theta x.
$$
This gives the CDGA equivalence between $H^*(UTM)$ and $\Omega^*(UTM)$.
\end{proof}

\begin{cor}
Let $M$ be a compact orientable formal manifold. If the Euler characteristic $\chi(M)<0$, then $UTM$ is non-formal.
\end{cor}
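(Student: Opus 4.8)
The plan is to deduce this directly from the preceding corollary that characterizes when $UTM$ is formal. Assuming $\chi(M)<0$, I would argue by contradiction: suppose $UTM$ were formal. Since $\chi(M)<0$ in particular gives $\chi(M)\neq 0$, the first alternative of that corollary is excluded, so the second must hold, namely $H^*(M)=\mathbb{K}[x]/(x^p)$ for a single generator $x$.

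The remaining task is a direct Euler characteristic computation, which I expect to be the only substantive (though entirely routine) step. First I would note that $M$ is necessarily even-dimensional, since an odd-dimensional closed orientable manifold has $\chi(M)=0$ by Poincar\'e duality, contradicting $\chi(M)<0$. Next, the analysis underlying case 2 — specifically the lemma showing $\mathcal{A}^{2i+1}=0$ whenever $\mathcal{A}_\theta$ is formal — forces the rational cohomology of $M$ to be concentrated in even degrees, so the generator $x$ has even degree. Hence the nonzero graded pieces of $H^*(M)=\mathbb{K}[x]/(x^p)$ sit in degrees $0,|x|,2|x|,\dots,(p-1)|x|$, each one-dimensional and all even.

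Reading off the Poincar\'e polynomial $\sum_{i=0}^{p-1}t^{i|x|}$ and evaluating at $t=-1$, every term contributes $+1$ because $|x|$ is even, so $\chi(M)=p>0$. This contradicts the hypothesis $\chi(M)<0$, and therefore $UTM$ cannot be formal, which is the claim.

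I anticipate no genuine obstacle here: the content is already contained in the earlier corollary, and the only new input is the elementary observation that a cohomology ring generated by a single even-degree element has strictly positive Euler characteristic. The one point worth a line of care is confirming that $x$ lies in even rather than odd degree; this follows because $\chi(M)\neq 0$ forces $\dim M$ to be even while the structural lemmas behind case 2 place all of $H^*(M)$ in even degrees, after which the single-generator ring is genuinely even and the sign computation is immediate.
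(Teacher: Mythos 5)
Your proposal is correct and follows essentially the same route as the paper, which states this corollary without proof as an immediate consequence of the preceding corollary: $\chi(M)<0$ rules out case 1, and your computation that case 2 forces $\chi(M)=p>0$ (using the earlier lemma to place $H^*(M)$ in even degrees, so every graded piece contributes $+1$) is exactly the routine verification the paper leaves implicit. The only cosmetic simplification available is that the odd-degree generator case can also be dismissed without the lemma, since graded commutativity over a field of characteristic $0$ forces $x^2=0$ for $|x|$ odd, giving $\chi(M)=0$, again a contradiction.
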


\begin{ex}
As a simple example, the circle bundles over Riemann surfaces distinguish the different cases above.
\begin{center}

\begin{tabular}{|c|c|c|}
\hline
genus & unit tangent bundle & Euler class is volume form \\
\hline
0 & formal & formal \\
\hline
1 & formal & non-formal \\
\hline
$\geq 2$ & non-formal & non-formal \\
\hline
\end{tabular}
\end{center}
\end{ex}

\section{An Obstruction to the Formality of General Sphere Bundles}

If the base manifold $M$ is compact and formal, we have established when the sphere bundle $X$ is formal. One may consider the case for non-formal $M$. We will discuss it in this section, and give an obstruction to the formality of $X$.

Let $\mathcal{M}$ be the minimal Sullivan model of a CDGA $\mathcal{A}$. For a closed element $\omega\in\mathcal{A}$, there exists a closed element in $\mathcal{M}$ whose cohomology class is corresponding to $[\omega]$. We will also write this element in $\mathcal{M}$ as $\omega$. Regardless of the choice of $\omega$, we have the quasi-isomorphism
$$
\mathcal{M}_{\omega}= \mathcal{M}\otimes\Lambda \theta \simeq \mathcal{A}\otimes\Lambda \theta = \mathcal{A}_{\omega},
$$
where $d\theta=\omega$. In this section we assume that $|\omega|$ is even. So $\theta^2$ is automatically 0.

Whether the Sullivan model $\mathcal{M}_{\omega}$ is minimal depends on whether $\omega\in\mathcal{M}$ is reducible. Note that the representatives of $[\omega]\in H^*(\mathcal{M})$ are either all reducible or all irreducible, since exact elements in $\mathcal{M}$ are reducible by the definition of minimal Sullivan algebra in Section 2.2.

\begin{prop}
When $\omega\in\mathcal{M}$ is reducible, $\mathcal{M}_{\omega}$ is a minimal Sullivan model of $\mathcal{A}_{\omega}$.
\end{prop}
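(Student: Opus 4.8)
The plan is to exploit the quasi-isomorphism $\mathcal{M}_\theta = \mathcal{M}\otimes\Lambda\theta \simeq \mathcal{A}_\theta$ displayed just above, which already identifies $\mathcal{M}_\theta$ as a CDGA quasi-isomorphic to $\mathcal{A}_\theta$. Thus the entire content of the proposition is the claim that $\mathcal{M}_\theta$ is itself a \emph{Sullivan minimal algebra}; once this is established, being quasi-isomorphic to $\mathcal{A}_\theta$ makes it a Sullivan minimal model of $\mathcal{A}_\theta$ by definition. So I would set up the proof as a verification of the two conditions in the definition of Section 2.3, together with reducibility of the differential on each generator.

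First I would record the easy algebraic facts. Since $\mathcal{M} = \Lambda V^*$ is free and $\theta$ is a single generator of odd degree $|\theta| = |\omega| - 1$ (odd because $|\omega|$ is even, so that $\theta^2 = 0$), the algebra $\mathcal{M}_\theta = \Lambda(V^*\oplus\mathbb{K}\theta)$ is free on the generating set $\{v_\alpha\}\cup\{\theta\}$. The original differentials $dv_\alpha$ are reducible because $\mathcal{M}$ is minimal, and $d\theta = \omega$ is reducible by the hypothesis of the proposition; hence every generator of $\mathcal{M}_\theta$ has reducible differential.

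The real work, and the place where reducibility is essential, is producing the well-ordering. Writing $\omega = \sum x_i y_i$ with $x_i, y_i \in \mathcal{M}^+$ and $|x_i| + |y_i| = |\omega| = |\theta| + 1$, each factor has degree at most $|\theta|$, so every generator $v_\alpha$ occurring in $\omega$ satisfies $|v_\alpha| \le |\theta|$. I would then extend the given well-ordering of $\{v_\alpha\}$ by inserting $\theta$ just above all generators of degree $\le |\theta|$ and below all generators of degree $> |\theta|$. With this placement, condition 2 (degrees weakly increasing along the order) holds by construction, and the degree bound just proved forces $d\theta = \omega \in \Lambda V^*_{\beta<\theta}$, which is condition 1.

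The hard part will be checking that this extended order is still a well-order rather than merely a total order, since $\theta$ is inserted in the interior of the existing order. I would argue that for any nonempty subset $S$: if $S$ meets $\{v_\alpha\}$, let $w$ be its least member in the original order; if $|w|\le|\theta|$ then $w$ is least in $S$, while if $|w| > |\theta|$ then the contrapositive of condition 2 forces every $v_\alpha\in S$ to have degree $> |\theta|$, so $\theta$ (if present) is least and otherwise $w$ is; and the case $S = \{\theta\}$ is trivial. This confirms that $\mathcal{M}_\theta$ satisfies both conditions with reducible differentials, completing the proof.
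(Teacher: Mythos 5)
Your proposal is correct and follows essentially the same route as the paper: the paper's proof inserts $\theta$ into the well-ordered generating set in exactly the position you describe (above all generators of degree $\leq|\theta|$, below those of higher degree) and then declares the verification that $\mathcal{M}_{\theta}$ is a Sullivan minimal algebra ``straightforward.'' You have simply carried out that verification explicitly --- the degree bound from reducibility giving condition 1, the degree-compatibility giving condition 2, and the check that the extended order is still a well-order --- all of which is sound.
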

\begin{proof}
Suppose $\mathcal{M}=\Lambda V^*$ and $V^*=\langle v_{\alpha}\rangle$. Write $\|v_{\alpha}\|=\alpha$ as the index of $v_{\alpha}$. Then set $\|\theta\|>\alpha$ for all $|v_{\alpha}|\leq|\theta|$, and $\|\theta\|<\alpha$ for all $|v_{\alpha}|<|\theta|$. So $\{v_{\alpha}\}\cup\{\theta\}$ is a well-ordered set. It is straightforward to verify that $\mathcal{M}_{\omega}=\Lambda(V^*\oplus \langle\theta\rangle)$ is a minimal Sullivan algebra.
\end{proof}

\begin{thm}\label{HL and reducible non-formal}
Suppose $\omega$ is a closed element in a minimal Sullivan algebra $\mathcal{M}$ satisfying the following conditions.

1. $|\omega|=2r$ for some odd integer $r$, and $[\omega]$ has a representative that can be written as
$$
\sum_{i=1}^k x_iy_i,
$$
where $x_i,y_i$ are all closed in $\mathcal{M}^r$.

2. There exists some $s\geq0$ such that $H^s(\mathcal{M})$ is non-trivial. Moreover, the morphism $\omega:H^s(\mathcal{M})\to H^{s+2r}(\mathcal{M})$ multiplying by $[\omega]$ is an isomorphism, and $\omega:H^{s-r}(\mathcal{M})\to H^{s+r}(\mathcal{M})$ is injective.

Then $\mathcal{M}_{\omega}=\mathcal{M}\otimes \Lambda\theta$ is non-formal, where $d\theta=\omega$.
\end{thm}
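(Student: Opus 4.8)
The plan is to detect non-formality through the Bianchi-Massey tensor $\mathcal{F}$ of $\mathcal{M}_\theta$ rather than through Massey products. Since the base is not assumed formal, $\mathcal{M}_\theta$ need not be Poincar\'e, so the equivalence of $\mathcal{F}$ with the uniform Massey product $\mathcal{T}$ is unavailable and I must work with the choice-independent obstruction $\mathcal{F}$. By the preceding Proposition the reducibility of $\omega$ guarantees that $\mathcal{M}_\theta$ is again a Sullivan minimal algebra, and since nonvanishing of $\mathcal{F}$ obstructs formality it suffices to exhibit a single $\xi\in\mathcal{B}^*(\mathcal{M}_\theta)$ with $\mathcal{F}(\xi)\neq0$.

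The reducibility hypothesis supplies the basic input. Setting $e_0=\sum_i[x_i]\cdot[y_i]$, the product $\sum_i x_iy_i=\omega=d\theta$ is exact in $\mathcal{M}_\theta$, so $e_0\in E^{2r}$ and one may take $\alpha([x_i])=x_i$, $\alpha([y_i])=y_i$, $\gamma(e_0)=\theta$. I would first treat $k=1$ as a model: there $x^2=y^2=0$ forces $\omega^2=(xy)^2=0$, the oddness of $r$ makes the full symmetrisation of $(e_0\cdot e_0)$ vanish, so $(e_0\cdot e_0)\in\mathcal{B}^{4r}(\mathcal{M}_\theta)$, and one computes
$$
\mathcal{F}(e_0\cdot e_0)=2[\theta\omega].
$$
This is nonzero because $\omega^2=0$ makes $\theta\omega$ closed and the Gysin sequence of $\mathcal{M}_\theta$ sends its class to $[\omega]\neq0$ lying in $\ker\big(\omega\colon H^{2r}(\mathcal{M})\to H^{4r}(\mathcal{M})\big)$. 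This already indicates why $r$ must be odd: for even $r$ the symmetrisation does not vanish and $(e_0\cdot e_0)\notin\mathcal{B}^*$, which is the obstruction to any analogous argument for $S^{4k+3}$-bundles.

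For general $k$ the element $(e_0\cdot e_0)$ is no longer admissible, since its symmetrisation $\sum_{i,j}([x_i]\cdot[y_i]\cdot[x_j]\cdot[y_j])$ has nonzero off-diagonal part and $\omega^2\neq0$ in general; this is where condition~2 must enter. The key observation I would exploit is that the isomorphism $\omega\colon H^s(\mathcal{M})\to H^{s+2r}(\mathcal{M})$ is in particular surjective, so every class of degree $s+2r$ lies in $\im\omega$ and is therefore exact in $\mathcal{M}_\theta$; consequently, for any $P\in H^{s+r}(\mathcal{M})$ the products $x_iP,\,y_iP\in H^{s+2r}$ are exact, furnishing a large supply of $E$-elements $[x_i]\cdot[P]$ and $[y_i]\cdot[P]$. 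Choosing a nonzero $a\in H^s(\mathcal{M})$ (which exists by condition~2) and drawing the partner classes from $\{x_ia,\,y_ia\}\subset H^{s+r}(\mathcal{M})$, I would assemble an exchange-type element of $\mathcal{B}^*(\mathcal{M}_\theta)$ in the spirit of the unit-tangent-bundle lemmas, with the Poincar\'e-dual partners used there replaced by these $a$-twisted classes; once again the oddness of $r$ is what drives the cancellation in the four-fold symmetrisation and places the element in $K[\mathcal{G}^2\mathcal{G}^2 H^*(\mathcal{M}_\theta)]$.

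The hard part will be the final computation of $\mathcal{F}$ on this element and the proof that its value is non-exact. Because $\mathcal{M}$ carries a nontrivial differential, unlike the CDGAs in the earlier lemmas, the maps $\gamma$ on the various $E$-pairs must be chosen with care so that $\im\gamma\subset\mathbf{I}(\theta)$ and the contributions telescope; the relation $\sum_i x_iy_i=\omega$ is what lets the index sums collapse to expressions in $\theta$, $\omega$ and $a$. Once $\mathcal{F}(\xi)$ is identified with an explicit closed element of $\mathbf{I}(\theta)$, I expect to prove it non-exact by feeding it into the Gysin long exact sequence and invoking the injectivity of $\omega\colon H^{s-r}(\mathcal{M})\to H^{s+r}(\mathcal{M})$, which is precisely the hypothesis that rules out the cancellation that would make the class vanish. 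Controlling the signs in the symmetrisation and pinning down which pairing certifies non-vanishing are the two places where I anticipate the most bookkeeping.
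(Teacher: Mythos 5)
Your strategy is genuinely different from the paper's (which never touches the Bianchi--Massey tensor in Section 4: it assumes formality, invokes the characterization of formal Sullivan minimal algebras via $V^*=C^*\oplus N^*$ with closed elements of $\mathbf{I}(N^*)$ exact, and derives a contradiction by an induction on the index $i$ of $\omega=\sum_{i=1}^k x_iy_i$), and your $k=1$ case is correct and complete: $(e_0\cdot e_0)\in\mathcal{B}^{4r}(\mathcal{M}_\theta)$ because $r$ is odd, $\mathcal{F}(e_0\cdot e_0)=2[\theta\omega]$, and exactness of $\theta\omega$ in $\mathcal{M}_\theta$ would force $\omega$ to be exact in $\mathcal{M}$, contradicting that $[\omega]$ multiplies non-trivially on $H^s(\mathcal{M})$. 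The gap is that for $k\geq 2$ --- which is the actual content of the theorem, e.g.\ in the Boothby--Wang corollary $k$ is arbitrary --- the proof stops exactly where it has to begin: the ``exchange-type element'' is never constructed, $\mathcal{F}$ is never computed on it, and the non-vanishing is only ``expected.'' Moreover the construction is not a routine imitation of the unit-tangent-bundle lemmas, because the supply of admissible pairs is too thin. For $k\geq2$ the individual $([x_i]\cdot[y_i])$ need not lie in $E^*$ (only the full sum $e_0$ does), so the available pairs are essentially $e_0$ and $([u]\cdot[Pa])$ with $u,P\in\{x_j,y_j\}$. Blocks $\big(e_0\cdot([u]\cdot[Pa])\big)$ have full symmetrisation $\sum_i([x_i]\cdot[y_i]\cdot[u]\cdot[Pa])$, whose terms with $i$ different from the indices of $u$ and $P$ have, generically, no partners in any other admissible block to cancel against; blocks $\big(([u]\cdot[Pa])\cdot([v]\cdot[Qa])\big)$ do symmetrise to zero for suitable signs, but they carry two $a$-factors, so their $\mathcal{F}$-values land in $H^{2s+4r-1}(\mathcal{M}_\theta)$, and condition 2 gives no control of $\omega$ on $H^{2s+2r}(\mathcal{M})$ or $H^{2s+2r-1}(\mathcal{M})$, which is what the Gysin argument you appeal to would need. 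So the deferred ``hard part'' is not bookkeeping; it is the theorem.

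There is also a structural reason to doubt the strategy can be completed at all. The equivalence ``formal $\iff\mathcal{F}=0$'' is proved in Section 3 only for sphere bundles over \emph{formal} bases; here $\mathcal{M}$ is not assumed formal, and $\mathcal{F}$ is an $m_3$-level invariant. The paper's own argument re-uses the formality hypothesis at every one of roughly $k/2$ inductive steps, producing classes $a_{i,2i}\in H^s(\mathcal{M})$ with $[a_{i,2i}x_j]=0$ for $j\leq 2i$ and $[a_{i,2i}x_{2i+1}]\neq 0$, and the contradiction only appears at the end of the chain; this is the signature of an obstruction whose order grows with $k$, which a single quadratic tensor such as $\mathcal{F}$ may simply not see. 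So to salvage your approach you would need either to exhibit an explicit element of $\mathcal{B}^*(\mathcal{M}_\theta)$ with provably nonzero $\mathcal{F}$ under hypotheses 1 and 2 alone --- which the analysis above suggests is impossible in general --- or to abandon $\mathcal{F}$ and argue, as the paper does, directly from the structure theory of formal Sullivan minimal algebras.
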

\begin{proof}
Assume that $\mathcal{M}_{\omega}$ is formal. By Theorem \ref{C oplus N decomposition} we can write $\mathcal{M}_{\omega}=\Lambda V^*$ and $V^*=C^*\oplus N^*$ such that all closed elements in $\mathbf{I}(N^*)$ are exact. 

Since $\omega:H^s(\mathcal{M})\to H^{s+2r}(\mathcal{M})$ is an isomorphism, $[\omega]\in H^{2r}(\mathcal{M})$ cannot be a trivial class. So $\mathcal{M}_{\omega}$ satisfies the hypothesis of Lemma \ref{choosing representatives of extension}, which allows us to reset $\omega$ and $\theta$. Then without loss of generality, we can assume that $\omega$ itself has the form $\sum_{i=1}^k x_iy_i$ in Condition 1, and $\theta\in \mathbf{I}(N^*)$.

We will first prove the following claim that will be used in proof.

\noindent\textit{Claim.} Suppose $\mathcal{M}$ satisfies the hypothesis of the theorem, and $\mathcal{M}_{\omega}$ is formal. Let $\beta\in\mathcal{M}^{s+r}$ be a closed element. If $\omega\beta=\sum w_jz_j$ where $w_j\in\mathcal{M}^{s+2r}$ are all exact in $\mathcal{M}$ and $z_j\in\mathcal{M}^r$ are all closed, then $\beta$ is exact in $\mathcal{M}$.

Since $w_j$ is exact in $\mathcal{M}$, it is also exact in $\mathcal{M}_{\omega}$ and can be written as $w_j=d(\xi_j+\theta\eta_j)$, where $\xi_j\in\mathcal{M}^{s+2r-1}$, $\eta_j\in\mathcal{M}^r$ and $\xi_j+\theta\eta_j\in\mathbf{I}(N^*)$. As $w_j=d\xi_j+\omega\eta_j+\theta\eta_j$ is in $\mathcal{M}$, we have $d\eta_j=0$ and $\omega\eta_j=w_j-d\xi_j$. It follows that $[\omega][\eta_j]=0$ in $H^{s+2r}(\mathcal{M})$. By the hypothesis of Condition 2, $[\eta_j]$ has to be 0.

Now we consider $\theta\beta-\sum (\xi_j+\theta\eta_j)z_j$. It is in $\mathbf{I}(N^*)$ as we assume $\theta$ and all $\xi_j+\theta\eta_j$ are in this ideal. Moreover,
$$
d\left(\theta\beta-\sum (\xi_j+\theta\eta_j)z_j\right) = \omega\beta-\sum w_jz_j =0.
$$
Hence, the closed element $\theta\beta-\sum (\xi_j+\theta\eta_j)z_j$ in $\mathbf{I}(N^*)$ has to be exact in $\mathcal{M}_{\omega}$, and we can write it as $d(\zeta+\theta\lambda)$ with $\zeta,\lambda\in\mathcal{M}$. Comparing the coefficient of $\theta$, we have $\beta-\sum \eta_jz_j=-d\lambda$. As we have shown that $z_j$ are all exact in $\mathcal{M}$, so is $\beta$. This proves the claim.

Next we prove by induction that for any $i$, there exists a closed but non-exact $a_i\in\mathcal{M}^s$ such that $[a_ix_j]=0$ in $H^{r+s}(\mathcal{M})$ for all $j\leq i$. The $i=0$ case is immediate from Condition 2, as $H^s(\mathcal{M})\neq 0$ and any representative of a non-trivial cohomology class can be taken as $a_0$.

Suppose we have found $a_0,\ldots,a_{i-1}$ for $i>0$. If $[a_{i-1}x_i]=0$ in $H^{r+s}(\mathcal{M})$ we can simply set $a_i=a_{i-1}$. Otherwise, since $\omega:H^s(\mathcal{M})\to H^{s+2r}(\mathcal{M})$ is an isomorphism according to Condition 2, for all $j>i$ we can write $[a_{i-1}x_ix_j]=[\omega b_j]$ with $b_j\in\mathcal{M}^s$ closed. Let $\beta=a_{i-1}x_i-\sum_{j>i}b_j y_j$. Then
$$
\omega\beta = \sum_{j\geq 1} a_{i-1}x_ix_jy_j-\sum_{j>i}\omega b_j y_j = \sum_{1\leq j\leq i-1} (a_{i-1}x_ix_j)y_j+\sum_{j>i} (a_{i-1}x_ix_j-\omega b_j)y_j.
$$
The last equation follows from $x_i^2=0$ as the degree of $x_i$ is odd. Since $a_{i-1}x_j$ are all exact in $\mathcal{M}$, so are the degree $s+2r$ elements $a_{i-1}x_ix_j$. As $(a_{i-1}x_ix_j-\omega b_j)y_j$ are also exact in $\mathcal{M}^{s+2r}$ and all $y_j$ are closed, we can apply the above claim to $\beta$. So $\beta$ is exact and $[a_{i-1}x_i]=\sum_{j>i}[b_j y_j]$ in $H^*(\mathcal{M})$.

$[a_{i-1}x_i]$ is assumed to be non-zero in $H^*(\mathcal{M})$, so there is some $j>i$ such that $[b_j]\neq 0$. Write $\omega b_j=a_{i-1}x_ix_j+d\eta$ for some $\eta\in\mathcal{M}$. Then for any $l\leq i$,
$$
\omega(b_jx_l) = a_{i-1}x_ix_jx_l+(d\eta)x_l=(a_{i-1}x_lx_i)x_j+(d\eta)x_l.
$$
When $l<i$, by the hypothesis of induction $a_{i-1}x_l$ is exact in $\mathcal{M}$, and hence so is $a_{i-1}x_lx_i$. When $l=i$, $a_{i-1}x_lx_i=0$. In either case $a_{i-1}x_lx_i$ and $d\eta$ are exact elements in $\mathcal{M}$ of degree $s+2r$, and $x_j,x_l\in\mathcal{M}^s$ are closed. Hence, we can apply the above claim to $b_jx_l$ to deduce that they are all exact in $\mathcal{M}$. Therefore, we can set $a_i=b_j$.

By induction we can find some non-zero $[a_k]\in H^s(\mathcal{M})$ such that $[a_kx_j]=0$ in $H^{r+s}(\mathcal{M})$ for all $j$. It follows that $[a_k\omega]=0$ in $H^{r+2s}(\mathcal{M})$. But by Condition 2, $\omega: H^r(\mathcal{M}) \to H^{r+2s}(\mathcal{M})$ is an isomorphism, which is a contradiction. So $\mathcal{M}_{\omega}$ cannot be formal.
\end{proof}

\begin{rmk}
The condition that $|\omega|\equiv 2 \,(\mathrm{mod}\,4)$ is necessary. For $|\omega|=4$, let $\mathcal{M}=\Lambda\langle x,\xi \rangle$ be the minimal Sullivan model of $\mathbb{C}P^2$, where $|x|=2$, $|\xi|=5$, $dx=0$, $d\xi=x^3$. Then $\omega=x^2$ induces an isomorphism $H^0(\mathcal{M})\to H^4(\mathcal{M})$. However, $\mathcal{M}_{\omega}$ is formal because we can set $C^*=\langle x,\theta x-\xi \rangle$ and $N^*=\langle \theta \rangle$.
\end{rmk}

On a symplectic manifold $(M,\omega)$, if $[\omega]$ is an integral cohomology class, there exists a circle bundle whose Euler class is $[\omega]$. This circle bundle is called the Boothby-Wang fibration.

\begin{cor}
Let $(M,\omega)$ be a connected symplectic manifold satisfying the hard Lefschetz property. Suppose $[\omega]$ is an integral and reducible cohomology class, i.e. there exists some $x_i,y_i\in H^1(M)$ such that $[\omega]=\sum x_i y_i$, then the Boothby-Wang fibration of $M$ is non-formal.
\end{cor}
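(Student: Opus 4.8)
The plan is to deduce the corollary directly from Theorem~\ref{HL and reducible non-formal}, by verifying its two hypotheses for the symplectic form $\omega$ on the Sullivan minimal model $\mathcal{M}$ of $M$. Write $\dim M=2n$. Since $\mathcal{M}\simeq\Omega^*(M)$ we have $H^*(\mathcal{M})\cong H^*(M)$, so the hard Lefschetz conditions transfer verbatim to $\mathcal{M}$; and the Boothby-Wang fibration $X$ has de Rham model $\Omega^*(M)\otimes\Lambda\theta\simeq\mathcal{M}_\theta$ with $d\theta=\omega$ (this is the odd-sphere, $k=1$ case of the CDGA equivalence of the previous section, with Euler class $[\omega]\in H^2(M)$). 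Hence $X$ is formal if and only if $\mathcal{M}_\theta$ is, and it suffices to show the latter is non-formal.

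First I would check Condition~1. The symplectic form has degree $|\omega|=2$, so $r=1$ is odd as required (in particular $|\omega|\equiv 2\pmod 4$, matching the necessity noted in the preceding remark). By the reducibility hypothesis, $[\omega]=\sum_i[x_i][y_i]$ with $[x_i],[y_i]\in H^1(M)$; lifting each class to a closed degree-one element of $\mathcal{M}^1$ produces a representative $\sum_i x_iy_i$ of $[\omega]$ whose factors are all closed in $\mathcal{M}^r=\mathcal{M}^1$.

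The substance is Condition~2, which I would establish with $s=n-1$. The hard Lefschetz property says $[\omega]^j:H^{n-j}(M)\to H^{n+j}(M)$ is an isomorphism for every $j$, and its standard consequence is that $[\omega]:H^i\to H^{i+2}$ is injective for $i\leq n-1$ and surjective for $i\geq n-1$. Taking $s=n-1$ then makes $\omega:H^{n-1}\to H^{n+1}$ an isomorphism (the $j=1$ case) and $\omega:H^{n-2}\to H^{n}$ injective (degree $n-2\leq n-1$); these are exactly the maps $\omega:H^s\to H^{s+2r}$ and $\omega:H^{s-r}\to H^{s+r}$. It remains to see $H^{n-1}(M)\neq 0$. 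Here I would use that $[\omega]\neq 0$ forces some $[x_i]\neq 0$, so $H^1(M)\neq 0$, while $H^0(M)=\mathbb{K}\neq 0$; composing the injective Lefschetz maps in the range $i\leq n-1$ embeds $H^0$ (when $n-1$ is even) or $H^1$ (when $n-1$ is odd) into $H^{n-1}$, so $H^{n-1}(M)\neq 0$.

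With both conditions verified, Theorem~\ref{HL and reducible non-formal} gives that $\mathcal{M}_\theta$, and therefore the Boothby-Wang fibration, is non-formal. The only delicate point is Condition~2: one must single out the middle index $s=n-1$, so that hard Lefschetz simultaneously supplies an isomorphism one degree above and injectivity one degree below, and one must separately argue $H^{n-1}(M)\neq 0$, which is precisely where the reducibility hypothesis (forcing $H^1\neq 0$) and the Lefschetz injectivity range combine. Everything else is routine bookkeeping.
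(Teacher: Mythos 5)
Your proposal is correct and takes essentially the same route as the paper: both verify the hypotheses of Theorem \ref{HL and reducible non-formal} with $r=1$ and $s=n-1$, lifting the $x_i,y_i$ to closed degree-one elements of $\mathcal{M}$ and reading off the isomorphism $\omega:H^{n-1}\to H^{n+1}$ and the injectivity of $\omega:H^{n-2}\to H^{n}$ from hard Lefschetz. The only (harmless) divergence is the sub-step showing $H^{n-1}(M)\neq 0$: the paper expands $[\omega]^n\neq 0$ to extract a nonzero degree-$(n-1)$ product $[x_{i_1}\cdots x_{i_{n-1}}]$, whereas you embed $H^0$ or $H^1$ (nonzero since some $[x_i]\neq 0$) into $H^{n-1}$ by iterated Lefschetz injectivity; both arguments are valid.
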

\begin{proof}
Let $\mathcal{M}$ be the minimal Sullivan model of $M$, then there are $\omega,x_i,y_i\in\mathcal{M}$ whose cohomology classes are same as the corresponding elements in $H^*(M)$, and they satisfy $\omega=\sum x_i y_i$.

Suppose $\dim M=2n$, then $\omega:H^{n-1}(\mathcal{M})\to H^{n+1}(\mathcal{M})$ is isomorphic and $\omega:H^{n-2}(\mathcal{M})\to H^n(\mathcal{M})$ by the hard Lefschetz property. As $[\omega]^n\neq 0$, there exists some $[x_{i_1}y_{i_1}\ldots x_{i_n}y_{i_n}]\neq 0$. Hence $[x_{i_1}\ldots x_{i_{n-1}}]\neq 0.$ Then we can apply Theorem \ref{HL and reducible non-formal} to prove that $\mathcal{M}_{\omega}$ and the Boothby-Wang fibration of $M$ are non-formal.
\end{proof}

When the base manifold $M$ is formal, the condition that $[\omega]$ is reducible in $H^*(M)$ is equivalent to having a reducible representative $\omega_0$ in $\mathcal{M}$, the minimal Sullivan model of $M$. The reason is that $\mathcal{M}$ can be generated by some $C^*\oplus N^*$ and closed elements in $\mathbf{I}(N^*)$ are all exact. So the $\Lambda C^*$ part of $\omega_0$ is also a representative of $[\omega]$. However, the sufficiency of this weakened condition for general base manifolds remains unknown.

Besides, it is uncertain whether the above corollary for symplectic manifolds still holds without the hard Lefschetz property.

Finally, when $\omega$ is irreducible, the minimal Sullivan model of $\mathcal{M}_{\omega}$ is slightly different. In this case, $\omega$ can be chosen a generator of $\mathcal{M}$, i.e. $\mathcal{M}=\Lambda V^*, V^*=\langle v_{\alpha} \rangle$ and $\omega=v_\alpha$ for some $\alpha$. Let $V^*/\omega$ be a subspace spanned by all $v_{\alpha}$ except $\omega$, where the order of $v_{\alpha}$ is preserved. Then let $\mathcal{M}/\omega=\Lambda(V^*/\omega)$ and $\Pi:\mathcal{M}\to\mathcal{M}/\omega$ be the natural projection. $\Pi\circ d$ can be taken as the differential of $\mathcal{M}/\omega$.

\begin{prop}
$\mathcal{M}/\omega$ is a minimal Sullivan model of $\mathcal{M}_{\omega}$. The inclusion is a quasi-isomorphism.
\end{prop}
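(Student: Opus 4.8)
The statement has two parts: that $\mathcal{M}/\omega$ is a Sullivan minimal algebra, and that the natural comparison map with $\mathcal{M}_\theta$ is a quasi-isomorphism. The plan is to dispose of the first part by a direct check, and to reduce the second to the acyclicity of a single auxiliary complex.

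First I would verify that $\mathcal{M}/\omega=\Lambda(V^*/\omega)$ with differential $\Pi\circ d$ is Sullivan minimal. Since $\omega$ is closed, the ideal $(\omega)\subset\mathcal{M}$ is preserved by $d$ (because $d(\omega\eta)=\omega\,d\eta$, using that $|\omega|$ is even), so $d$ descends to $\mathcal{M}/(\omega)=\mathcal{M}/\omega$ and the descended map is exactly $\Pi\circ d$; in particular $(\Pi d)^2=0$ comes for free. For a generator $v_\beta\neq\omega$ one has $dv_\beta\in\Lambda V^*_{<\beta}$, hence $\Pi(dv_\beta)\in\Lambda(V^*_{<\beta}/\omega)$, so the well-order inherited from $\{v_\alpha\}$ still satisfies the Sullivan nilpotence condition, and the degree-order compatibility is inherited. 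Minimality survives as well: if $dv_\beta=\sum x_iy_i$ with $x_i,y_i\in\mathcal{M}^+$, then $\Pi(dv_\beta)=\sum\Pi(x_i)\Pi(y_i)$, and each surviving factor $\Pi(x_i),\Pi(y_i)$ is either $0$ or again of positive degree, so $\Pi(dv_\beta)$ is reducible.

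For the quasi-isomorphism the clean route is to work with the honest CDGA surjection $\Phi:\mathcal{M}_\theta\to\mathcal{M}/\omega$ sending $\omega\mapsto 0$, $\theta\mapsto 0$ and fixing the remaining generators; one checks on generators that $\Phi d=(\Pi d)\Phi$, so $\Phi$ is a chain map, and it is clearly surjective. The key computation is that $\ker\Phi=\omega\mathcal{M}\oplus\theta\mathcal{M}$ is acyclic. Writing a general element as $\omega a+\theta b$ and using $d(\omega a+\theta b)=\omega(da+b)-\theta\,db$ together with the injectivity of multiplication by $\omega$ on $\mathcal{M}$ (which holds because $\mathcal{M}=\Lambda(V^*/\omega)\otimes\mathbb{K}[\omega]$ is free over $\mathbb{K}[\omega]$), a cocycle is forced to satisfy $b=-da$, i.e. to have the form $\omega a-\theta\,da$; a parallel computation of $d$ on $\ker\Phi$ shows its image is exactly the same set $\{\omega b'-\theta\,db'\}$, so cocycles and coboundaries coincide and $H^*(\ker\Phi)=0$. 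The long exact sequence of $0\to\ker\Phi\to\mathcal{M}_\theta\xrightarrow{\Phi}\mathcal{M}/\omega\to 0$ then shows $\Phi$ is a quasi-isomorphism. Finally, because $\mathcal{M}/\omega$ is a Sullivan (hence cofibrant) algebra and $\Phi$ is a surjective quasi-isomorphism, the standard lifting property produces a section $\iota:\mathcal{M}/\omega\to\mathcal{M}_\theta$ with $\Phi\iota=\mathrm{id}$; this $\iota$ is the inclusion, and $\Phi_*\iota_*=\mathrm{id}$ forces $\iota_*$ to be an isomorphism, so $\iota$ is a quasi-isomorphism.

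The main obstacle to watch for is that the \emph{naive} inclusion $\Lambda(V^*/\omega)\hookrightarrow\mathcal{M}_\theta$ is in general \emph{not} a chain map: when $dv_\beta$ contains a term divisible by $\omega$, the two differentials disagree precisely by that term. This is why one cannot simply include, and must instead either absorb such $\omega$-terms using $\omega=d\theta$ (correcting the inclusion inductively over the well-order) or, as above, route the argument through $\Phi$ and a section. Correspondingly, the real content is the acyclicity of $\ker\Phi$, which amounts to $\mathbb{K}[\omega]\otimes\Lambda\theta$ being acyclic because $\omega$ is a non-zero-divisor. I would emphasize that this cannot be extracted from a plain Künneth argument, since the differential on $\mathcal{M}_\theta$ couples the $\Lambda(V^*/\omega)$ and $\mathbb{K}[\omega]\otimes\Lambda\theta$ factors; the direct cocycle–coboundary computation above is exactly what sidesteps the twisting.
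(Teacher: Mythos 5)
The first thing to note is that the paper states this proposition \emph{without proof}---it is the closing observation of Section 4---so there is no argument of the author's to compare yours against; your write-up in effect supplies the missing proof, and it is correct. The check that $\mathcal{M}/\omega$ is Sullivan minimal (the ideal $(\omega)$ is $d$-stable because $d\omega=0$, the inherited well-order still satisfies the nilpotence and degree conditions, and $\Pi$ preserves reducibility) is complete. The heart of your argument is also sound: $\Phi:\mathcal{M}_\theta\to\mathcal{M}/\omega$ is a surjective chain map; its kernel $\omega\mathcal{M}\oplus\theta\mathcal{M}$ is acyclic, since a cocycle $\omega a+\theta b$ forces $b=-da$ (using that $\omega$ is a non-zero-divisor on $\mathcal{M}=\Lambda(V^*/\omega)\otimes\mathbb{K}[\omega]$, which is where the standing assumption that $|\omega|$ is even enters) and then $\omega a-\theta\,da=d(\theta a)$ with $\theta a\in\ker\Phi$; the long exact sequence makes $\Phi$ a quasi-isomorphism, and lifting $\mathrm{id}_{\mathcal{M}/\omega}$ through the surjective quasi-isomorphism $\Phi$ (Sullivan algebras are cofibrant, so the standard lifting lemma applies) gives a section $\iota$ with $\Phi\iota=\mathrm{id}$, whence $\iota_*$ is an isomorphism. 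Your closing caveat is the most valuable part and is, strictly speaking, a correction to the paper's wording: the naive inclusion $\Lambda(V^*/\omega)\hookrightarrow\mathcal{M}_\theta$ is in general \emph{not} a chain map. For instance, if $\mathcal{M}=\Lambda(x,y,z)$ with $|x|=|y|=2$, $|z|=3$, $dx=dy=0$, $dz=xy$, and $\omega=x$, then $\bar{d}z=0$ in $\mathcal{M}/\omega$ while $dz=xy\neq0$ in $\mathcal{M}_\theta$; the map only becomes a chain map after the correction $z\mapsto z-\theta y$, which is precisely a section of $\Phi$ of the kind your lifting argument produces. So ``the inclusion'' in the statement can only mean such a corrected section (agreeing with the inclusion modulo the ideal generated by $\omega$ and $\theta$), and with that reading your proof establishes the proposition in full.
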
 

Thus, the way of proving formality or finding obstructions for irreducible Euler classes may be quite different than the reducible case. Algebraically,  Amann and Kapovitch constructed a formal $S^3$-fibration with a non-formal base \cite[Page 21]{AK}. Although this example is infinite dimensional, it may provide a potential insight into the construction of a formal sphere bundle over a non-formal manifold. Additionally, it is interesting to investigate the condition under which there exists a formal sphere bundle over a non-formal manifold.

\bibliographystyle{plain} 
\bibliography{refs}

\vskip 1 cm
\noindent
{Beijing Institute of Mathematical Sciences and Applications, Huairou District, Beijing, China 101408}\\
{\it Email address:}~{\tt jiaweizhou@bimsa.cn}
\end{document}